\documentclass[twoside,11pt]{article}

\usepackage[preprint]{jmlr2e}

\usepackage[
    hmargin=1.25in, 
    vmargin=1.1in
]{geometry} 

\usepackage{amsmath}%
\usepackage{graphicx}%
\usepackage{multirow}%
\usepackage{amsmath,amssymb,amsfonts}%
\usepackage{thm-restate}
\usepackage{mathrsfs}%
\usepackage[title]{appendix}%
\usepackage{xcolor}%
\usepackage{textcomp}%
\usepackage{manyfoot}%
\usepackage{booktabs}%
\usepackage{algorithm}%
\usepackage{algorithmicx}%
\usepackage{algpseudocode}%
\usepackage{listings}%
\usepackage{cleveref}%

\usepackage{mathtools}
\usepackage{bm}
\usepackage{mathrsfs}
\usepackage{xparse}
\usepackage{nicefrac}

\def\setC{{\mathcal{C}}}

\def\setH{{\mathcal{H}}}

\def\setX{{\mathcal{X}}}
\def\setY{{\mathcal{Y}}}

\DeclareMathOperator*{\argmin}{arg\,min}

\newcommand{\nleft}{\mathopen{}\mathclose\bgroup\left}
\newcommand{\nright}{\aftergroup\egroup\right}

\NewDocumentCommand{\lin}{sm}{\IfBooleanTF{#1}{\langle#2\rangle}{\nleft\langle#2\nright\rangle}}
\NewDocumentCommand{\ceil}{sm}{\IfBooleanTF{#1}{\lceil#2\rceil}{\nleft\lceil#2\nright\rceil}}
\NewDocumentCommand{\floor}{sm}{\IfBooleanTF{#1}{\lfloor#2\rfloor}{\nleft\lfloor#2\nright\rfloor}}
\NewDocumentCommand{\norm}{sm}{\IfBooleanTF{#1}{\Vert#2\Vert}{\nleft\Vert#2\nright\Vert}}
\NewDocumentCommand{\abs}{sm}{\IfBooleanTF{#1}{\vert#2\vert}{\nleft\vert#2\nright\vert}}
\NewDocumentCommand{\paren}{sm}{\IfBooleanTF{#1}{(#2)}{\nleft(#2\nright)}}
\NewDocumentCommand{\parens}{sm}{\IfBooleanTF{#1}{(#2)}{\nleft(#2\nright)}}
\NewDocumentCommand{\braces}{sm}{\IfBooleanTF{#1}{\{#2\}}{\nleft\{#2\nright\}}}
\NewDocumentCommand{\brackets}{sm}{\IfBooleanTF{#1}{[#2]}{\nleft[#2\nright]}}

\newcommand{\R}{\mathbb{R}}

\DeclareMathOperator*{\E}{\mathbb{E}}
\DeclareMathOperator*{\V}{\mathrm{Var}}
\let\Pr\relax
\DeclareMathOperator*{\Pr}{\mathbb{P}}

\NewDocumentCommand{\KL}{smm}{%
	\mathrm{KL}{%
		\IfBooleanTF{#1}{%
			[#2\;\Vert\;#3]%
		}{%
			\nleft[#2\;\middle\Vert\;#3\nright]%
		}%
	}%
}
\NewDocumentCommand{\Expect}{som}{%
	\E%
	\IfNoValueTF{#2}{}{_{#2}}%
	\IfBooleanTF{#1}{[#3]}{\nleft[#3\nright]}%
}
\NewDocumentCommand{\Var}{som}{%
	\V%
	\IfNoValueTF{#2}{}{_{#2}}%
	\IfBooleanTF{#1}{[#3]}{\nleft[#3\nright]}%
}

\NewDocumentCommand{\Prob}{sm}{
	\Pr{%
		\IfBooleanTF{#1}{%
			[#2]%
		}{%
			\nleft[#2\nright]%
		}%
	}%
}
\DeclareDocumentCommand{\Normal}{g}{\mathop{\mathcal{N}}\IfNoValueF{#1}{\nleft(#1\nright)}}

\DeclareDocumentCommand{\bigO}{g}{\mathop{\mathcal{O}}\IfNoValueF{#1}{\nleft(#1\nright)}}

\newcommand{\aligns}[1]{\begin{align*}#1\end{align*}}
\newcommand{\alignn}[1]{\begin{align}#1\end{align}}

\def\m!{\mkern-2mu}
\def\n!{\mkern-1mu}
\def\p!{\mkern-.5mu}
\def\rn!{\mkern1mu}
\def\rp!{\mkern.5mu}

\newenvironment{textstylemath}{
    \let\olddisplaystyle\displaystyle%
    \let\displaystyle\textstyle%
}{%
    \let\displaystyle\olddisplaystyle%
    \ignorespacesafterend%
}

\let\nablasymbol\nabla
\RenewDocumentCommand{\nabla}{e{_^}}{%
    \nablasymbol
    \IfValueT{#1}{_{\mspace{-4mu}#1\,}}%
    \IfValueT{#2}{^{#2}}\n!%
}

\DeclareMathOperator{\dom}{dom}
\DeclareMathOperator{\interior}{int}

\DeclareMathOperator{\relint}{ri}

\newcommand*{\cC}{\mathcal{C}} 
 
\newcommand*{\cH}{\mathcal{H}}

\newcommand*{\cN}{\mathcal{N}}\newcommand*{\cX}{\mathcal{X}}

\makeatletter
\renewcommand{\paragraph}{\@startsiction{paragraph}{4}{\z@}{1.5ex plus
  0.5ex minus .2ex}{-0.5em}{\normalsize\bf}}
\makeatother
\newcommand{\removesomewhitespace}{}%
\newcommand{\slashfrac}[2]{#1/#2}

\ShortHeadings{Mirror Polyak}{Kunstner, D'Orazio, Portella, and Taylor}
\firstpageno{1}

\newcommand{\noqed}{\renewcommand{\BlackBox}{}}
\newcommand{\jmlrQED}{\BlackBox{}}

\AtBeginDocument{%
    \setlength{\abovedisplayskip}{8pt plus 4pt minus 1pt}%
    \setlength{\belowdisplayskip}{8pt plus 4pt minus 1pt}%
}

\begin{document}

\title{Mirror Polyak and a Primal-Dual Lifting}

\author{\name Frederik Kunstner \email frederik.kunstner@inria.fr \\
        \addr INRIA, École Normale Supérieure
        \AND
        \name Ryan D'Orazio \email ryan.dorazio@mila.quebec \\
        \addr MILA, Université de Montréal
        \AND
        \name Victor S. Portella \email v.portella@ufabc.edu.br \\
        \addr CMCC, Federal University of ABC
        \AND
        \name Adrien Taylor \email adrien.taylor@inria.fr \\
        \addr INRIA, École Normale Supérieure}

\maketitle

\begin{abstract}%
    First-order methods 
    typically require a specific step-size 
    that depends on the regularity conditions of the objective function,
    such as the smoothness, Lipschitz continuity, or strong convexity constants.
    The Polyak step-size is a classical alternative 
    for subgradient descent on convex functions
    that only uses knowledge of the optimal value of the objective function 
    and automatically adapts to the above-mentioned regimes.
    However, many optimization problems are 
    better described by non-Euclidean geometries
    and are more amenable to mirror descent.
    Extending this adaptivity to mirror descent is subtle.
    Some existing generalizations of the Polyak step-size 
    rely on norms instead of purely on relative geometry,
    excluding many of the use cases of mirror descent.
    In this work, we revisit a variant of the Polyak step-size based on Bregman projections
    due to~\citet{kiwiel1997bregman}, which we call \emph{mirror Polyak}.
    This method is known to converge asymptotically,
    but its convergence rate is not known.
    We show that mirror Polyak enjoys 
    guarantees similar to its Euclidean counterpart, 
    automatically adapting to relative notions of 
    smoothness, Lipschitz continuity, or strong convexity. 
    We then leverage mirror Polyak 
    to avoid having to know the optimal value 
    in some structured optimization problems
    such as regularized linear and logistic regression.
    We propose a lifted formulation based on convex duality 
    with optimal value exactly zero 
    and a natural mirror map given by the problem's structure. 
    Mirror Polyak applied to the lifted problem 
    enjoys the same worst-case guarantees 
    as the Polyak step-size in the original problem
    if we knew the optimal value.
\end{abstract}
\newcommand{\citeauthornolink}[1]{\begin{NoHyper}\citeauthor{#1}\end{NoHyper}}

\section{Introduction}

The Polyak step-size~\citep{polyak1969minimization}
is one of the simplest mechanisms to pick the step-size of 
(sub)gradient descent
to optimize a convex objective function ${f:\R^d \to \R}$.
Given knowledge of the optimal value~$f_\star \coloneqq \min_{x \in \R^d} f(x)$
and a subgradient~$g(x_t) \in \partial f(x_t)$ at an iterate \(x_t \in \R^d\), 
gradient descent with the Polyak step-size is defined as 
\alignn{
    \label{eq:euclidean_polyak_ss}
    x_{t+1} = x_t - \alpha_t g(x_t), 
    \quad \text{ where } \quad 
    \alpha_t = \frac{f(x_t) - f_{\star}}{\norm{g(x_t)}_2^2}.
} 
The convergence rate of the method 
automatically adapts to the regularity conditions of the problem,
such as smoothness, Lipschitzness, or strong convexity, 
trading the need to know those problem-specific constants with the need to know $f_\star$~\citep{hazan2019revisiting}. 

A limitation of the Polyak step-size is that it is inherently Euclidean.
Problems that are neither smooth, Lipschitz, nor strongly convex
are better handled with algorithms based on different geometries such as mirror descent~\citep{nemirovski1983problem,beck2003167}.
For example, the geometry induced by the Burg entropy 
is useful in Poisson inverse problems~\citep{bauschke2017descent,lu2018relatively},
while variants of Sinkhorn's algorithm 
are instances of mirror descent using the negative Shannon entropy~\citep{benamou2015iterative,mishchenko2019sinkhorn}. 
Mirror descent is particularly effective 
when there is a good match between 
the reference function and the geometry of the problem, 
described using relative variants of 
smoothness, strong convexity or Lipschitz continuity~\citep{birnbaum2011distributed,bauschke2017descent,lu2018relatively,lu2019relativecontinuity}.

This paper has two contributions. 
First, we analyze an extension of the Polyak step-size to mirror descent 
under relative regularity conditions.
Second, we present a lifted primal-dual gap construction 
that removes the need to know $f_{\star}$ for structured problems.
These two contributions are linked by the fact that the lifted problem 
may lose regularity properties such as Euclidean smoothness.
But it retains regularity properties with respect to a natural mirror map,
making it well-suited 
for a mirror descent extension of the Polyak~step-size. 

\paragraph{Mirror Polyak.}
We present a non-asymptotic analysis of a generalization 
of the Polyak step-size to mirror descent
based on Bregman projections due to~\citet{kiwiel1997bregman},
which we call \emph{mirror Polyak}.
We show that it enjoys almost identical\footnote{Up to a log-factor in the case of relatively strongly convex and Lipschitz functions.} 
adaptive convergence guarantees 
as the Euclidean Polyak step-size~(summarized in \cref{tbl:rates}), 
but under relative notions of smoothness, 
strong convexity and Lipschitzness~\citep{bauschke2017descent,lu2018relatively,lu2019relativecontinuity}. 
While the relatively smooth and relatively Lipschitz 
cases follow existing proof strategies, 
establishing convergence rates under relative strong convexity 
requires new arguments. 
In particular, step-size bounds or closed-form recursions 
on the distance to the optimum used in previous analyses 
do not work without assuming the mirror map 
is strongly convex relative to a norm.
These results support the view that Kiwiel's mirror Polyak is a more natural extension of the Polyak step-size 
to mirror descent, as other generalizations~\citep{you2022minimizing,dorazio2023stochastic}
require additional assumptions to be well-defined.

\paragraph{Lifting.}
We leverage mirror Polyak 
to remove the need to know $f_\star$ 
by instead assuming we know the structure of the problem.
A known but little-used trick is to optimize a lifted primal-dual gap, 
which has a known optimal value of $0$~\citep[][\S 1.2]{devanathan2024polyak}.
This approach is applicable to 
regularized linear or logistic regression problems,
but the lifted problem may no longer be smooth or strongly convex 
in the original Euclidean geometry.
We show that a natural choice of reference function for mirror descent 
has the same bound on the condition number %
in a relative geometry. 
This scheme allows mirror Polyak 
to achieve the same worst-case convergence guarantees 
as the classical Polyak step-size if we knew $f_\star$,
at the cost of doubling the per-iteration cost 
due to the evaluation of the gradient of the primal and dual problem.

\begin{table}[tbp]
\centering
\begin{tabular}{@{}lllll@{}}
    \toprule
    & $L$-smooth & $L$-smooth & $G$-Lipschitz & $G$-Lipschitz \\
     & 
    Convex & 
    $\mu$-strongly convex  & 
    Convex  & 
    $\mu$-strongly convex  
    \\
    \midrule
    \textbf{Mirror Polyak }
    & $\displaystyle \frac{L}{T}$
    $\,$ $(\bar x_T)$ 
    & $\displaystyle \paren{1-\frac{\mu}{L+\mu}}^T$
    $(x_T)$ 
    & $\displaystyle \frac{G}{\sqrt{T}}$
    $\,$ $(\bar x_T)$ 
    & $\displaystyle \frac{G^2}{\mu}\frac{\log T}{T}$ 
    $\,$ $(x^{\mathrm{best}}_T)$ 
    \\[.75em]
    \midrule
    \textbf{Euclidean Polyak}
    & $\displaystyle \frac{L}{T}$
    & $\displaystyle \paren{1-\frac{\mu}{L+\mu}}^T$ $\dagger$
    & $\displaystyle \frac{G}{\sqrt{T}}$
    & $\displaystyle \frac{G^2}{\mu} \frac{1}{T}$ \\[.75em]
    \bottomrule
\end{tabular}
\vspace{.5em}
\caption{%
Rates for \(T\) iterations of mirror Polyak
vs.\ best Euclidean rates~\citep{hazan2019revisiting}, 
omitting initial conditions. 
Rates hold for~$x_T$:~last,~$\bar x_T$:~average,~$x^{\mathrm{best}}_T$:~best iterate~(all rates hold~for~$x^{\mathrm{best}}_T$).
Smoothness, strong convexity, and Lipschitz continuity
are relative to a reference function.
$\dagger$:~\citet{hazan2019revisiting} 
get a worse rate of $(1-{\mu}/{2L})^T $ via~a~different~technique.
}
\vspace{-.5em}
\label{tbl:rates}
\end{table}

\paragraph{Notation.}
We study the minimization of a 
proper convex function~{$f \colon \setX \to \R$} on~{$\setX \subseteq \R^d$},
potentially constrained to a closed convex set~{$\setC \subseteq \R^d$}
containing a finite minimum~${f_{\star} \coloneqq f(x_{\star})}$ at~$x_{\star} \in \setX \cap \cC$.
We write~$g(x) \in \partial f(x)$ for a selection of a subgradient of~\(f\) at \(x\).
The mirror descent method %
relies on a convex reference function~$h \colon \setX \to \R$ of Legendre type~\citep[\S 26]{rockafellar1970convex},
and the Bregman~divergence $D$ induced by $h$,
\aligns{
    D(x, y) \coloneqq h(x) - h(y) - \lin{\nabla h(y), x - y}
    \qquad \forall x \in \setX, y \in \interior(\setX).
}
For any $x \in \interior(\setX)$, we denote its dual through~$h$ by~$\hat x \coloneqq \nabla h(x)$,
to write the mirror descent update 
and the three-point identity for Bregman divergences as
\aligns{
    \hat x_{t+1} = \hat x_t - \alpha_t g(x_t),
    &&
    D(x, z) = D(y, z) + \lin{\hat y - \hat z, x - y} + D(x, y).
}

\section{The Polyak step-size and mirror descent}

The expression for the Polyak step-size~\eqref{eq:euclidean_polyak_ss}
is sometimes taken as its \emph{definition}.
This view has motivated generalizations of the Polyak step-size to mirror descent~\citep{you2022polyaktypestepsizesmirror,you2022minimizing,dorazio2023stochastic}
that replace the Euclidean norm with a dual norm:
\alignn{
    \hat x_{t+1} = \hat x_t - \alpha_t g(x_t),
    &&
    \alpha_t = \frac{f(x_t) - f_{\star}}{\norm*{g(x_t)}_\ast^2}, \qquad\forall t \geq 0, 
    &&
    \text{ where } \hat x = \nabla h(x).
    \label{eq:norm-based-polyak}
}
Here, \(\norm{\cdot}_\ast\) is the dual 
of a reference norm~$\norm{\cdot}$.
Their analyses
rely on relationships between~$f$ and the norm~$\norm{\cdot}$,
assuming for example that~$h$ is strongly convex 
and $f$ has Lipschitz continuous gradients
with respect to the norm~$\norm{\cdot}$~\citep{dorazio2023stochastic, you2022minimizing}.
This assumption is stronger than necessary for mirror descent,
as the method can be defined and analyzed 
using only relationships between the objective~$f$
and the reference function~$h$,
without requiring any norm to be well-defined~\citep{bauschke2017descent,lu2018relatively}. 
These assumptions exclude applications where the 
reference function is not strongly convex with respect to any norm, 
such as the Burg entropy
for Poisson regression~\citep[][]{bauschke2017descent}
or log-determinant problems~\citep[][]{lu2018relatively}.
They also exclude problems which are not smooth in any norm, 
such as the dual problem for logistic regression 
that arises in the lifting described in~\Cref{sec:lifting}.

Instead, we show that 
a geometric perspective
gives
a norm-free generalization 
of the Polyak step-size to the mirror descent setting.
The Euclidean Polyak step-size
can be viewed 
as a projection onto a hyperplane that separates the current iterate and the optimum,
or equivalently as the minimization of 
an upper bound on the distance to the optimum, 
using the convexity of~$f$ 
and the knowledge of the optimum value~$f_{\star}$.
This view was already given by \citet{polyak1969minimization},
and is folklore in the community~\citep[][]{gower2025analysis}.
For simplicity, we assume \(\dom f = \setC = \R^d\) in this section.
\begin{proposition}[Euclidean Polyak step-size]
    \label{prop:euclidean-polyak-interpretation}
The Polyak step-size~\eqref{eq:euclidean_polyak_ss}
results from the projection of the iterate~$x_t$
onto the following hyperplane 
that separates~$x_t$~from~$x_{\star}$,
\aligns{
    x_{t+1} = \arg\min_{y \in \R^d} 
    \tfrac{1}{2}\norm{y-x_t}^2 : 
    f(x_t) + \lin{g(x_t), y - x_t} = f_{\star}.
}
Alternatively, 
the update can be viewed as~$x_{t+1} = x_t - \alpha_t g(x_t)$ 
where~$\alpha_t$ minimizes the bound
\aligns{
    \tfrac{1}{2}\norm{x_{t+1}-x_{\star}}^2
    \leq
    \min_\alpha
    \tfrac{1}{2}\norm{x_{t}-x_{\star}}^2
    - \alpha \paren{f(x_t) - f_{\star}}
    + \alpha^2 \tfrac{1}{2}\norm{g(x_t)}^2.
}

\end{proposition}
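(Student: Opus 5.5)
The plan has two parts: a Lagrangian computation for the projection characterization, and a quadratic expansion for the bound characterization. Throughout, write $g_t \coloneqq g(x_t)$ and assume $g_t \neq 0$; otherwise $x_t$ is already a minimizer, and the statement is vacuous or taken by convention.

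\textbf{Projection.} The feasible set $\{y : f(x_t) + \lin{g_t, y - x_t} = f_\star\}$ is an affine hyperplane, nonempty because $g_t \neq 0$. The objective $\tfrac12\norm{y-x_t}^2$ is strongly convex, so the projection exists and is unique. By the Lagrange (KKT) conditions, the minimizer satisfies $y - x_t = -\lambda g_t$ for some multiplier $\lambda \in \R$. Substituting into the constraint gives $f(x_t) - \lambda\norm{g_t}^2 = f_\star$, hence $\lambda = (f(x_t)-f_\star)/\norm{g_t}^2 = \alpha_t$. This recovers \eqref{eq:euclidean_polyak_ss}.

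\textbf{Separation.} The subgradient inequality gives $f_\star \ge f(x_t) + \lin{g_t, x_\star - x_t}$, so $x_\star$ lies in the half-space $\{y : f(x_t) + \lin{g_t, y-x_t} \le f_\star\}$. Since $f(x_t) \ge f_\star$, the point $x_t$ lies in the opposite closed half-space. Hence the hyperplane separates $x_t$ from $x_\star$.

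\textbf{Bound.} For any $\alpha$, expand
\aligns{
\tfrac12\norm{x_t - \alpha g_t - x_\star}^2 = \tfrac12\norm{x_t-x_\star}^2 - \alpha\lin{g_t, x_t - x_\star} + \tfrac{\alpha^2}{2}\norm{g_t}^2.
}
For $\alpha \ge 0$, convexity gives $\lin{g_t, x_t-x_\star} \ge f(x_t)-f_\star$, and therefore
\aligns{
\tfrac12\norm{x_t - \alpha g_t - x_\star}^2 \le \tfrac12\norm{x_t-x_\star}^2 - \alpha\paren{f(x_t)-f_\star} + \tfrac{\alpha^2}{2}\norm{g_t}^2.
}
The right-hand side is a convex quadratic in $\alpha$, minimized at $\alpha = (f(x_t)-f_\star)/\norm{g_t}^2 = \alpha_t$. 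This minimizer is nonnegative, so it lies in the range where the bound is valid. The minimum over $\alpha \ge 0$ coincides with the unconstrained minimum over $\alpha \in \R$ in the statement, because for $\alpha < 0$ the right-hand side exceeds its value at $\alpha = 0$. Evaluating the bound at $\alpha = \alpha_t$ therefore gives the displayed inequality for $x_{t+1}$.

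\textbf{Main subtlety.} The only delicate point is the sign restriction $\alpha \ge 0$, which the convexity step requires. It is resolved by the nonnegativity of $\alpha_t$, as above.
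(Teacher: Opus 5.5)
Your proposal is correct and follows essentially the same route as the paper: the Lagrange/KKT conditions for the hyperplane projection give $\alpha_t\norm{g(x_t)}^2 = f(x_t)-f_\star$, and expanding $\tfrac12\norm{x_t-\alpha g(x_t)-x_\star}^2$, applying convexity, and minimizing the resulting quadratic in $\alpha$ gives the bound. Your explicit handling of the restriction $\alpha \ge 0$ in the convexity step, and your check that the hyperplane separates $x_t$ from $x_\star$, fill in details the paper leaves implicit.
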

\removesomewhitespace{}
\begin{proof}
For the projection,
the KKT conditions give that there is~$\alpha_t \geq 0$ such that
\alignn{
    \label{eq:euclid_polyak_projection}
    x_{t+1} = x_t - \alpha_t g(x_t) 
    \quad \text{ and } \quad
    \lin{g(x_t), x_{t+1} - x_t} + \paren{f(x_t) - f_{\star}} = 0.
}
Together we get \(\alpha_t \norm{g(x_t)}^2 = {f(x_t) - f_{\star}}\), the Polyak step-size~\eqref{eq:euclidean_polyak_ss}.
Alternatively, note that
\aligns{
    \tfrac{1}{2}\norm{x_{t+1}-x_{\star}}^2
    &=
    \tfrac{1}{2}\norm{x_{t}-x_{\star}}^2
    + \alpha \lin{g(x_t), x_{\star} - x_{t}}
    + \alpha^2 \tfrac{1}{2}\norm{g(x_t)}^2,
    \\
    &\leq
    \tfrac{1}{2}\norm{x_{t}-x_{\star}}^2
    - \alpha \paren{f(x_t) - f_{\star}}
    + \alpha^2 \tfrac{1}{2}\norm{g(x_t)}^2,
}
by convexity.
Minimizing in \(\alpha\) gives
the Polyak step-size~\eqref{eq:euclidean_polyak_ss}.
\end{proof}

\paragraph{Mirror Polyak.} 
Based on this geometric view, 
the Polyak step-size can be generalized 
to a projection method, as proposed by \citet{kiwiel1997bregman},
which we call \emph{mirror Polyak}.
The idea is to replace the Euclidean projection in~\eqref{eq:euclid_polyak_projection} with a Bregman projection, yielding
\begin{equation}
    \label{eq:def_mirror_polyak}
    x_{t+1} \in  \argmin_{y \in \setX} D(y, x_t) 
   \quad \text{ such that } \quad 
   f(x_t) + \lin{g(x_t), y - x_t} = f_{\star}.
\end{equation}
This can be written as mirror descent with a Polyak-like step-size
that minimizes a bound on the Bregman divergence to the optimum.

\begin{proposition}[Mirror Polyak]
    \label{prop:mirror-polyak}
    Given $x_t \in \interior \cX$,
    Mirror Polyak \eqref{eq:def_mirror_polyak} 
    is a mirror descent step, 
    $\hat x_{t+1} = \hat x_{t+1}(\alpha_t) 
    \coloneqq \hat x_t - \alpha_t g(x_t)$,
    where the step-size \(\alpha_t \geq 0\) 
    minimizes
    \aligns{
        D(x_{\star}, x_{t+1}(\alpha_t)) 
        &\leq
        \min_\alpha 
        D(x_{\star}, x_t)
        - \alpha \paren{f(x_t) - f_{\star}}
        + D(x_t, x_{t+1}(\alpha)).
    }
\end{proposition}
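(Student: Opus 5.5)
We mirror the two-part argument of \cref{prop:euclidean-polyak-interpretation}: first derive the mirror descent form of the Bregman projection~\eqref{eq:def_mirror_polyak} from its optimality conditions, then show that the resulting step-size minimizes the stated bound.

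\textbf{Step 1: projection as a mirror step.} Since $h$ is of Legendre type and $x_t \in \interior\cX$, the function $y \mapsto D(y, x_t)$ is strictly convex and essentially smooth. Its minimizer over the affine hyperplane $\{y : f(x_t) + \lin{g(x_t), y - x_t} = f_\star\}$ therefore lies in $\interior\cX$, provided the hyperplane meets $\interior\cX$. The Lagrange condition then reads
\[
\nabla h(x_{t+1}) - \nabla h(x_t) + \alpha_t g(x_t) = 0 ,
\]
that is, $\hat x_{t+1} = \hat x_t - \alpha_t g(x_t)$, together with the constraint $\lin{g(x_t), x_{t+1}(\alpha_t) - x_t} = -(f(x_t) - f_\star)$.

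To see that $\alpha_t \geq 0$, set $\phi(\alpha) \coloneqq \lin{g(x_t), x_{t+1}(\alpha) - x_t}$. We have $\phi(0) = 0$, and $\phi$ is nonincreasing because
\[
\lin{g, \nabla h^*(\hat x_t - \alpha g) - \nabla h^*(\hat x_t - \beta g)} \cdot (\alpha - \beta) \leq 0
\]
by monotonicity of $\nabla h^*$. Since we need $\phi(\alpha_t) = -(f(x_t) - f_\star) \leq 0$, we may take $\alpha_t \geq 0$.

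\textbf{Step 2: the bound.} Apply the three-point identity with $x = x_\star$, $y = x_{t+1}$, $z = x_t$, rearranged:
\[
D(x_\star, x_{t+1}) = D(x_\star, x_t) - \lin{\hat x_{t+1} - \hat x_t, x_\star - x_{t+1}} - D(x_{t+1}, x_t).
\]
It is cleaner to use the identity with the roles chosen so that $D(x_t, x_{t+1})$ appears. A direct computation gives
\[
D(x_\star, x_{t+1}) = D(x_\star, x_t) + \lin{\hat x_t - \hat x_{t+1}, x_\star - x_t} + D(x_t, x_{t+1}).
\]
Substituting $\hat x_t - \hat x_{t+1} = \alpha g(x_t)$ and using convexity, $\lin{g(x_t), x_\star - x_t} \leq -(f(x_t) - f_\star)$ for $\alpha \geq 0$, yields
\[
D(x_\star, x_{t+1}(\alpha)) \leq D(x_\star, x_t) - \alpha(f(x_t) - f_\star) + D(x_t, x_{t+1}(\alpha)) \eqqcolon \psi(\alpha).
\]

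\textbf{Step 3: optimality of $\alpha_t$.} Note that $D(x_t, x_{t+1}(\alpha)) = h^*(\hat x_t - \alpha g) - h^*(\hat x_t) + \alpha \lin{g, x_t}$, which is convex and differentiable in $\alpha$. Its derivative is
\[
\tfrac{d}{d\alpha} D(x_t, x_{t+1}(\alpha)) = \lin{g, x_t - x_{t+1}(\alpha)} .
\]
Hence $\psi'(\alpha) = -(f(x_t) - f_\star) - \phi(\alpha)$, and this vanishes exactly when the hyperplane constraint of Step 1 holds. Since $\psi$ is convex, the projection step-size $\alpha_t$ minimizes $\psi$. For $\alpha < 0$ the convexity bound does not apply, so the minimization should be read over $\alpha \geq 0$; Step 1 shows the minimizer lies there anyway.

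\textbf{Main obstacle.} The delicate step is the well-posedness in Step 1: existence of the projection within $\interior\cX$, and existence of a root of $\phi(\alpha) = -(f(x_t) - f_\star)$ in $\dom h^*$. If the hyperplane does not intersect $\cX$, the argmin is empty. I would handle this by assuming the projection exists, as in the definition~\eqref{eq:def_mirror_polyak}. The hyperplane separates $x_t$ from $x_\star$, and the projection lies in $\interior\cX$ by essential smoothness of $h$.
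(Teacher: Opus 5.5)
Your proof is correct and follows essentially the same route as the paper: the KKT conditions of the Bregman projection give the mirror step, the three-point identity plus convexity gives the bound, and writing $D(x_t, x_{t+1}(\alpha))$ as a Bregman divergence of $h^*$ shows the bound is convex in $\alpha$, with first-order condition equal to the hyperplane constraint. Your monotonicity argument for $\alpha_t \geq 0$ and your explicit derivative computation fill in details the paper leaves implicit.
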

\removesomewhitespace
\begin{proof}
    Since \(x_t \in \interior \cX\), the KKT conditions guarantee
    that there is $\alpha_t \geq 0$ such that
    \aligns{
        \hat x_{t+1} = \hat x_t - \alpha_t g(x_t)
        \quad \text{and} \quad 
        f(x_t) + \lin{g(x_{t}), x_{t+1} - x_t} = f_{\star}.
    }
    Once combined, we obtain the mirror Polyak step-size condition
    \alignn{
        \alpha_t \paren{f(x_t) - f_{\star}}
        =
        \lin{\hat x_t - \hat x_{t+1}, x_t - x_{t+1}}.
        \label{eq:relative-polyak-condition}
    }
    We refer to \(\alpha_t\) satisfying the above condition as the \emph{mirror Polyak step-size}.
    To obtain the upper bound in the divergence, 
    one can use the three-point identity and convexity to bound
    \aligns{
        D(x_{\star}, x_{t+1}(\alpha)) 
        &= 
        D(x_t, x_{t+1}(\alpha)) + \lin{\alpha g(x_t), x_{\star} - x_t} + D(x_{\star}, x_t),
        \\
        &\leq
        D(x_t, x_{t+1}(\alpha)) - \alpha \paren{f(x_t) - f_{\star}} + D(x_{\star}, x_t).
    }
    This bound is convex in $\alpha$ because $D(x_t, x_{t+1}(\alpha)) = D_{h^*}(\hat x_t - \alpha g(x_t), \hat x_t)$ where $D_{h^*}$ is the divergence induced by $h^*$, %
    and its minimum satisfies the mirror Polyak condition~\eqref{eq:relative-polyak-condition}.
\end{proof}
The method has also been studied 
as an instance of subgradient projectors~\citep{bauschke2003bregman} 
or firmly nonexpansive operators~\citep{cegielski2012iterative, bauschke2019convex}.
This generalization is not as well-known 
beyond the convex analysis community.
Recent works have instead focused on 
norm-based generalizations~\citep{you2022minimizing, dorazio2023stochastic}
and analyses under norm-based regularity assumptions~\citep{Gower2024a}.
We argue that mirror Polyak~(\cref{eq:def_mirror_polyak})
is the more natural generalization 
of the Polyak step-size to mirror descent,
preserving its original geometric intuition.
We show that, as in the Euclidean case, 
mirror Polyak automatically adapts to the degree of 
relative smoothness, Lipschitz continuity and strong convexity, 
without assuming $h$ to be strongly convex in~any~norm.

\subsection{Properties of mirror Polyak}

Before diving into the results, 
we discuss some properties of mirror Polyak,
including its computability, 
relation with the norm-based variant,
and handling of~constraints.

\paragraph{Computability.}
The mirror Polyak step-size is defined implicitly,
as it requires finding a step-size~$\alpha_t$ satisfying~\cref{eq:relative-polyak-condition}.
This equation has an explicit solution for quadratic~$h$.
But even for simple reference functions 
such as the negative entropy,~$\alpha_t$
can only be approximated~\citep{malitsky2025entropic}. 
Fortunately, finding $\alpha_t$ boils down to finding 
the minimum of a one-dimensional convex function, 
so root-finding procedures 
can find a solution to floating-point precision in a few iterations~\citep{kiwiel1998generalizedbp}.
Computing a step of mirror Polyak 
thus requires a few evaluations of~$\nabla h$ and $\nabla h^*$,
but only one evaluation of~$\nabla f$,
which is usually the most expensive operation.
For a typical regression problem 
with~$n$ samples in~$d$ dimensions,
computing~$\nabla f$ costs $O(nd)$ 
while computing~$\nabla h$ costs $O(d)$.

\paragraph{Comparison with norm-based variant.}
Even when the reference function~$h$ is strongly convex
and the norm-based Polyak step-size~\eqref{eq:norm-based-polyak} is well-defined,
the mirror Polyak step-size~\eqref{eq:relative-polyak-condition} is larger,
and may perform better, as shown in \Cref{sec:experiments}.
\begin{proposition}
    \label{prop:relative-polyak-larger}
    If \(h\) is 1-strongly convex
    with respect to a norm~\(\norm{\cdot}\),
    the mirror Polyak step-size $\alpha_t$ in~\cref{eq:relative-polyak-condition}
    is at least as large as the norm-based variant in~\cref{eq:norm-based-polyak}.
\end{proposition}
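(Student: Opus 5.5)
Write $g = g(x_t)$ and $\Delta = f(x_t) - f_\star \ge 0$; if $\Delta = 0$ both step-sizes vanish and there is nothing to prove. The plan is to compare the two step-sizes through the convex one-dimensional function of \Cref{prop:mirror-polyak},
\aligns{
    \phi(\alpha) \coloneqq -\alpha \Delta + D(x_t, x_{t+1}(\alpha)) = -\alpha\Delta + D_{h^*}(\hat x_t - \alpha g, \hat x_t).
}
By \Cref{prop:mirror-polyak}, the mirror Polyak step-size $\alpha_t$ is a minimizer of $\phi$. By the proof of \Cref{prop:euclidean-polyak-interpretation}, the norm-based step $\beta_t = \Delta/\norm{g}_\ast^2$ minimizes the analogous quadratic $-\alpha\Delta + \tfrac{\alpha^2}{2}\norm{g}_\ast^2$. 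So it suffices to show that $\phi$ is bounded above by this quadratic, with equality of derivatives in the right direction at $\beta_t$.

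\textbf{Step 1 (smoothness of $h^*$).} Since $h$ is $1$-strongly convex with respect to $\norm{\cdot}$, the conjugate $h^*$ is $1$-smooth with respect to $\norm{\cdot}_\ast$. It is cleaner to work with derivatives. Let $\psi(\alpha) \coloneqq D_{h^*}(\hat x_t - \alpha g, \hat x_t)$. Then
\aligns{
    \psi'(\alpha) = \lin{g, \nabla h^*(\hat x_t) - \nabla h^*(\hat x_t - \alpha g)},
}
and by $1$-smoothness of $h^*$ (Cauchy--Schwarz with the dual pairing) we get $\psi'(\alpha) \le \alpha\norm{g}_\ast^2$ for $\alpha \ge 0$.

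\textbf{Step 2 (compare stationarity conditions).} Since $\phi'(\alpha) = -\Delta + \psi'(\alpha)$, Step 1 gives $\phi'(\alpha) \le -\Delta + \alpha\norm{g}_\ast^2$, which is $\le 0$ for every $\alpha \in [0,\beta_t]$. Thus $\phi$ is nonincreasing on $[0,\beta_t]$. Moreover, $\phi'$ is nondecreasing because $\phi$ is convex, as noted in \Cref{prop:mirror-polyak}. Combining these, any $\alpha < \beta_t$ has $\phi'(\alpha) \le -\Delta + \alpha\norm{g}_\ast^2 < 0$ (using $\Delta > 0$). Hence $\phi$ is strictly decreasing on $[0,\beta_t]$, and no point below $\beta_t$ can be a stationary point. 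Equivalently, the mirror Polyak condition~\eqref{eq:relative-polyak-condition} reads $\alpha\Delta = \alpha\psi'(\alpha)$, i.e.\ $\psi'(\alpha_t) = \Delta$, and this cannot hold for $\alpha < \beta_t$ since there $\psi'(\alpha) < \Delta$. Therefore $\alpha_t \ge \beta_t$.

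\textbf{Main obstacle.} The delicate point is domain and differentiability. We need $\hat x_t - \alpha g \in \dom h^* = \interior$ of the dual domain for $\alpha \in [0,\beta_t]$, so that $\psi$ is differentiable there. Legendre-type $h$ together with strong convexity should give $\dom h^* = \R^d$ with $\nabla h^*$ Lipschitz, which settles this. If $h^*$ were only defined on a subset, I would instead argue directly from the projection definition~\eqref{eq:def_mirror_polyak}, using the fact that the multiplier $\alpha_t$ exists by the KKT argument of \Cref{prop:mirror-polyak}, and then apply the derivative comparison on the interval where it is defined.
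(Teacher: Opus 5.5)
Your proposal is correct and is essentially the paper's argument: multiplying your bound $\psi'(\alpha_t) \le \alpha_t\norm{g}_\ast^2$ by $\alpha_t$ gives the paper's key inequality $\lin{\hat x_t - \hat x_{t+1}, x_t - x_{t+1}} \le \norm{\hat x_t - \hat x_{t+1}}_\ast^2$, and your $\psi'(\alpha_t) = \Delta$ is the condition~\eqref{eq:relative-polyak-condition} divided by $\alpha_t$; the only difference is that you derive the inequality from $1$-smoothness of $h^*$, while the paper cites the strong-convexity inequality for $h$ (Nesterov, Thm.~2.1.10). The convexity and monotonicity scaffolding on $\phi$ and the domain concern are unnecessary, because the inequality is only needed at the actual step $\alpha_t$, where $\hat x_{t+1} = \hat x_t - \alpha_t g$ is well defined by construction.
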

\removesomewhitespace
\begin{proof}
    Using the $1$-strong convexity of \(h\) in \(\norm{\cdot}\),
    we have~\citep[][Thm~2.1.10]{nesterov2018lectures}
    \aligns{
        \alpha_t (f(x_t) - f_{\star})
        &= \lin{\hat x_t - \hat x_{t+1}, x_t - x_{t+1}}
        \leq 
        \norm{\hat x_t - \hat x_{t+1}}_*^2
        = 
        \alpha_t^2  \lVert g(x_t) \rVert_*^2.
    }
    Rearranging yields 
    $\alpha_t \geq \paren{f(x_t) - f_{\star}}/{\norm{g(x_t)}_*^2}$,
    the norm-based Polyak step-size~\eqref{eq:norm-based-polyak}.
\end{proof}

\paragraph{Handling constraints.}
If~$f$ is restricted to a closed convex set~\(\setC \subseteq \R^d\),
\citet{kiwiel1997bregman} discusses two options.
We can first project onto the hyperplane,
then onto \(\setC\):
\aligns{
    x_{t+1} \in \argmin_{x \in \setX \cap \setC} D(x, x_{t+\nicefrac{1}{2}}),
    &&
    x_{t+\nicefrac{1}{2}} \in \argmin_{x \in \setX} D(x, x_t) :
    f(x_t) + \lin{g(x_t), x - x_t} = f_{\star}.
}
Another option is to project
onto the intersection of the hyperplane and the constraint:
\begin{equation}
    x_{t+1} \in \argmin_{x \in \setX \cap \setC} D(x, x_t) 
    \quad \text{ such that } \quad 
    f(x_t) + \lin{g(x_t), x - x_t} = f_{\star}.
    \label{eq:constrained_polyak_def}
\end{equation}
Even in the Euclidean case, 
both variants lead to different methods~\citep{kimu93}. 
Our results apply to both variants 
in the case of relatively Lipschitz functions,
but the rates for relatively smooth functions 
do not directly apply to the sequential projection.
We focus on the joint projection~\eqref{eq:constrained_polyak_def},
which has the following properties. 
\begin{lemma}[Properties of mirror Polyak]
    \label{prop:breg_stationarity}
    Assume \(x_0 \in \setC \cap \interior(\setX)\) 
    and let \(x_{t+1}\) be given as in~\eqref{eq:constrained_polyak_def}.   
    If
    \(x_{\star} \in \relint(\setC) \cap \interior(\setX) \), then there  is \(\alpha_t \geq 0\) for each $t \geq 0$ such that
    \begin{subequations}
    \begin{align}
        \label{eq:general-mirror-descent-step}
        \hat x_t - \alpha_t g(x_t) - \hat x_{t+1} &\in \cN_{\setC}(x_{t+1}),
        \quad \text{and} \quad 
        f(x_t) + \lin{g(x_t), x_{t+1} - x_t} = f_{\star},
        \\
        \label{eq:general-relative-polyak-condition}
        \alpha_t (f(x_t) - f_{\star}) 
        &\geq \lin{\hat x_t - \hat x_{t+1}, x_t - x_{t+1}} =  D(x_{t+1}, x_t) + D(x_t, x_{t+1}),
        \\
        \label{eq:breg_stationarity}
        D(x_{\star}, x_{t+1}) &\leq D(x_{\star}, x_t) - D(x_{t+1}, x_t).
    \end{align}
    \end{subequations}
    Here, $\cN_{\setC}(x)$ is the normal cone of \(\setC\) at $x \in \setC$,~$\cN_{\setC}(x) \coloneqq \{p \in \R^d \colon \lin{p, z - x} \leq 0 ~\forall z \in \setC\}$.
    
\end{lemma}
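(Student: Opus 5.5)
We argue by induction on $t$, keeping $x_t \in \setC \cap \interior(\setX)$ so that the subproblem~\eqref{eq:constrained_polyak_def} is well posed.

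\emph{Feasibility and existence.} The feasible set $H_t \coloneqq \{y : f(x_t) + \lin{g(x_t), y - x_t} = f_{\star}\}$ intersected with $\setC \cap \setX$ is nonempty whenever $f(x_t) > f_{\star}$. Indeed, the halfspace $\{y : f(x_t) + \lin{g(x_t), y - x_t} \leq f_{\star}\}$ contains $x_{\star}$ by convexity, while $x_t$ lies on the other side. Hence the segment $[x_t, x_{\star}] \subseteq \setC \cap \setX$ crosses $H_t$. If instead $f(x_t) = f_{\star}$, then $x_t$ itself is feasible and $x_{t+1} = x_t$ with $\alpha_t = 0$, so this degenerate case is trivial.

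Since $h$ is Legendre, the Bregman projection exists and lies in $\interior(\setX)$; $D(\cdot, x_t)$ is essentially smooth, so the minimizer avoids the boundary of $\setX$. For the KKT conditions we need a constraint qualification for the intersection of the affine set $H_t$ with $\setC$. This is where $x_{\star} \in \relint(\setC)$ enters. If $f(x_t) + \lin{g(x_t), x_\star - x_t} < f_\star$ strictly, then the segment from $x_t$ to $x_\star$ meets $H_t$ at a point of $\relint(\setC)$. (Points of $(x_t, x_\star]$ are in $\relint \setC$ because $x_\star \in \relint \setC$ and $x_t \in \setC$.) This gives Slater's condition for the affine constraint relative to $\setC$. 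Otherwise $x_\star$ itself lies in $H_t \cap \relint(\setC)$.

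KKT (e.g.\ Rockafellar's Thm 28.2) then gives a multiplier $\lambda \in \R$ with $-\nabla_y D(x_{t+1}, x_t) - \lambda g(x_t) \in \cN_{\setC}(x_{t+1})$, that is, $\hat x_t - \lambda g(x_t) - \hat x_{t+1} \in \cN_\setC(x_{t+1})$. Setting $\alpha_t = \lambda$ gives the first part of~\eqref{eq:general-mirror-descent-step}; the second part is just feasibility.

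\emph{Sign of $\alpha_t$ and inequality~\eqref{eq:general-relative-polyak-condition}.} We test the normal-cone inclusion at $z = x_t \in \setC$:
\aligns{
\lin{\hat x_t - \hat x_{t+1}, x_t - x_{t+1}} \leq \alpha_t \lin{g(x_t), x_t - x_{t+1}} = \alpha_t (f(x_t) - f_{\star}),
}
where the last equality uses the hyperplane constraint. The left side equals $D(x_{t+1},x_t)+D(x_t,x_{t+1}) \geq 0$ by the three-point identity, which gives~\eqref{eq:general-relative-polyak-condition}.

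For $\alpha_t \geq 0$ with $f(x_t) > f_\star$: if $x_{t+1} \neq x_t$, the left side is positive by strict convexity of $h$, so $\alpha_t > 0$. The case $x_{t+1} = x_t$ is impossible because $x_t \notin H_t$. I expect this constraint-qualification and sign bookkeeping to be the main technical obstacle. The multiplier of an equality constraint is a priori free in sign, and the argument above recovers its sign only through the test point $z = x_t$.

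\emph{Inequality~\eqref{eq:breg_stationarity}.} We test the normal-cone inclusion at $z = x_\star \in \setC$:
\aligns{
\lin{\hat x_t - \hat x_{t+1}, x_\star - x_{t+1}} \leq \alpha_t \lin{g(x_t), x_\star - x_{t+1}} = \alpha_t\big(\lin{g(x_t), x_\star - x_t} + f(x_t) - f_\star\big) \leq 0,
}
where the equality uses the constraint and the final inequality uses convexity of $f$ together with $\alpha_t \geq 0$. The three-point identity gives $D(x_\star, x_t) = D(x_{t+1}, x_t) + \lin{\hat x_{t+1} - \hat x_t, x_\star - x_{t+1}} + D(x_\star, x_{t+1})$. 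The middle term is $\geq 0$ by the display above, which yields~\eqref{eq:breg_stationarity}.

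Finally $x_{t+1} \in \setC \cap \interior(\setX)$, so the induction continues.
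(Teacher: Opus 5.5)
Your proof is correct. For existence, interiority of $x_{t+1}$ and \eqref{eq:general-relative-polyak-condition}, it matches the paper in substance. The one difference there is bookkeeping. The paper (Appendix~\ref{apx:slater-condition}) relaxes the hyperplane to the half-space $\{y : f(x_t) + \lin{g(x_t), y - x_t} \le f_\star\}$. Then $x_\star$ itself witnesses Slater's condition, and $\alpha_t \ge 0$ is automatic as an inequality multiplier. You instead keep the equality constraint, take the Slater point on $[x_\star, x_t)$, and recover $\alpha_t > 0$ from the normal-cone test at $z = x_t$. This spares you the paper's unproved remark that projecting onto the half-space and onto its boundary coincide.

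The genuine difference is in \eqref{eq:breg_stationarity}. The paper chains the one-step bound of Proposition~\ref{prop:mirror-polyak}, $D(x_\star, x_{t+1}) \le D(x_\star, x_t) - \alpha_t(f(x_t) - f_\star) + D(x_t, x_{t+1})$, with \eqref{eq:general-relative-polyak-condition}, and defers the constrained case to a footnote. You test the normal cone at $z = x_\star$ and apply the three-point identity, which is the generalized Pythagorean inequality for a Bregman projection onto a set containing $x_\star$.

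Your route is the more robust one. Write $\hat x_t - \hat x_{t+1} = \alpha_t g(x_t) + n$ with $n \in \cN_\setC(x_{t+1})$. The first inequality of the paper's chain then picks up an extra term $\lin{n, x_\star - x_t}$, which has no sign, because the normal cone at $x_{t+1}$ only controls $\lin{n, z - x_{t+1}}$. For a concrete case, take $h = \tfrac12\norm{\cdot}_2^2$, $\setC = \{y : y_1 + y_2 \ge -\tfrac12\}$, $f(y) = \max\{\abs{y_1+1}, \abs{y_2 - \tfrac12 - \epsilon}\}$ with $0 < \epsilon < \tfrac12$, and $x_t = 0$. One finds $x_{t+1} = (-1, \tfrac12)$, $\alpha_t = \tfrac32$ and $x_\star = (-1, \tfrac12 + \epsilon)$. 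That intermediate inequality then reads $\tfrac{\epsilon^2}{2} \le -\tfrac14 + \tfrac{\epsilon}{2} + \tfrac{\epsilon^2}{2}$, which is false, whereas \eqref{eq:breg_stationarity} holds.

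The paper's chain is shorter and exact when $\setC = \R^d$: then $n = 0$ and the two arguments are rearrangements of one another. In the constrained setting, your test at $x_\star$ is what makes the lemma go through.
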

\removesomewhitespace
\begin{proof}
    Since $x_{\star} \in \relint(\setC) \cap \interior(\setX)$ 
    constraint qualification holds
    for the joint projection~\eqref{eq:constrained_polyak_def},
    see Appendix~\ref{apx:slater-condition} for details and possible relaxations of this assumption.
    Thus, 
    the existence of \(\alpha_t\) as in~\cref{eq:general-mirror-descent-step} 
    follows from the KKT conditions and since 
    \(x_{t+1} \in \interior \setX\) 
    as~$h$ is Legendre~\citep[Thm.~3.12]{bauschke1997}. 
    Equation~\eqref{eq:general-relative-polyak-condition} 
    follows from~\cref{eq:general-mirror-descent-step},
    \begin{equation*}
        \alpha_t(f(x_t) - f_{\star}) 
        = \lin{-\alpha_t g(x_t),x_{t+1} - x_{t}} 
        \geq \lin{\hat x_{t+1} - \hat x_t, x_{t+1} - x_t},
    \end{equation*}
    where the inequality is the normal cone condition~\eqref{eq:general-mirror-descent-step},
    and the equality to $D(x_{t+1}, x_t) + D(x_t, x_{t+1})$ 
    follows from the three-point identity of Bregman divergences.
    Finally, for~\cref{eq:breg_stationarity}, 
    combining the decrease of the Bregman divergence in\footnote{%
    Although this result assumes $\cC = \R^d$, 
    the inequality still holds with a very similar proof when $\cC \neq \R^d$.
    }
    Proposition~\ref{prop:mirror-polyak} 
    with \cref{eq:general-relative-polyak-condition}
    gives
    \begin{equation*}
        D(x_{\star}, x_{t+1}) \leq D(x_{\star}, x_t) -\alpha_t(f(x_t)-f(x_{\star}))  + D(x_{t}, x_{t+1}) 
        \leq 
        D(x_{\star}, x_t) - D(x_{t+1}, x_t).
        \tag*{\BlackBox}
    \end{equation*}
    \noqed
\end{proof}
All the results in the following sections 
assume the conditions of Lemma~\ref{prop:breg_stationarity}~hold.

\section{Convergence rates across function classes}
\label{sec:rates}

While convexity of $f$ is sufficient 
to establish the asymptotic convergence of the method,
getting a global non-asymptotic rate requires additional assumptions.
We consider the generalizations of smoothness and strong convexity 
of~\citet{birnbaum2011distributed,bauschke2017descent,lu2018relatively},
and the generalization of Lipschitz continuity
of~\citet{lu2019relativecontinuity},
using inner products instead of norms for the latter 
following \citet{zhoup0h20a}. 
These definitions reduce to their classical Euclidean 
counterparts when $h = \tfrac{1}{2}\norm{\cdot}_2^2$.
\begin{definition}[Relative geometry]
    We say that $f$ is 
    $L$-smooth, $\mu$-strongly convex, or $G$-Lipschitz continuous
    relative to $h$ 
    if, for all $x \in \interior(\setX)$ and $y \in \setX$,
    \aligns{
        &\text{$L$-smooth: }
        &
        f(y) &\leq f(x) + \lin{g(x), y - x} + L D(y,x), 
        \\
        &\text{$\mu$-strongly convex: }
        &
        f(y) &\geq f(x) + \lin{g(x), y - x} + \mu D(y,x), 
        \\
        &\text{$G$-Lipschitz continuous: }
        &
        \lin{g(x), x - y} &\leq G \sqrt{2 D(y, x)}.
    }
\end{definition}

\subsection{Convergence rate under relative smoothness}
\label{sec:rel-smoothness}
For a convex and relatively smooth function,
we recover the expected sublinear rate.

\begin{theorem}
    \label{thm:smooth-convex}
    If \(f\) is \(L\)-smooth relative to \(h\), 
    mirror Polyak guarantees
    \begin{equation*}
        f(\bar x_T) - f_{\star} \leq \frac{L D(x_{\star}, x_0)}{T}, 
        \quad \text{ for } \quad \bar x_T \coloneqq \frac{1}{T} \sum_{t=1}^T x_{t}. 
    \end{equation*}
\end{theorem}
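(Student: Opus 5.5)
The plan is to bound $f(x_{t+1}) - f_\star$ at each step by a telescoping quantity, namely the decrease $D(x_\star, x_t) - D(x_\star, x_{t+1})$ scaled by $L$. Summing this over $t$ and applying Jensen's inequality to $\bar x_T$ then gives the result.

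\textbf{Step 1: apply relative smoothness at $x_t$ with $y = x_{t+1}$.} This gives
\[
f(x_{t+1}) \leq f(x_t) + \lin{g(x_t), x_{t+1} - x_t} + L D(x_{t+1}, x_t).
\]
The hyperplane constraint in~\eqref{eq:general-mirror-descent-step} states that $f(x_t) + \lin{g(x_t), x_{t+1} - x_t} = f_\star$. Substituting this, the first two terms collapse and we obtain
\[
f(x_{t+1}) - f_\star \leq L D(x_{t+1}, x_t).
\]
This is the key point: the Polyak projection lands exactly on the level $f_\star$ of the linearization, so only the divergence term remains. Since $x_{t+1} \in \interior(\setX)$, the iterates stay in the region where the definition applies.

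\textbf{Step 2: use the Bregman stationarity property.} By~\eqref{eq:breg_stationarity} in Lemma~\ref{prop:breg_stationarity},
\[
D(x_{t+1}, x_t) \leq D(x_\star, x_t) - D(x_\star, x_{t+1}).
\]
Hence
\[
f(x_{t+1}) - f_\star \leq L\bigl(D(x_\star, x_t) - D(x_\star, x_{t+1})\bigr).
\]
Summing over $t = 0, \dots, T-1$ telescopes the right-hand side to $L\bigl(D(x_\star, x_0) - D(x_\star, x_T)\bigr) \leq L D(x_\star, x_0)$. This bounds $\sum_{t=1}^{T} (f(x_t) - f_\star)$.

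\textbf{Step 3: pass to the average.} Dividing by $T$ and using convexity of $f$ via Jensen, $f(\bar x_T) \leq \frac{1}{T}\sum_{t=1}^T f(x_t)$, yields the claim. The same argument also bounds the best iterate.

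There is no serious obstacle. The only care needed is to confirm that the constrained setting preserves the exact equality $f(x_t) + \lin{g(x_t), x_{t+1}-x_t} = f_\star$. It does, since the joint projection~\eqref{eq:constrained_polyak_def} imposes the hyperplane as a hard constraint. This is exactly why the rate would not transfer directly to the sequential projection variant.
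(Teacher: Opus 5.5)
Your proposal is correct and matches the paper's proof step for step. Relative smoothness at $x_t$, combined with the hyperplane equality, gives $f(x_{t+1}) - f_\star \leq L D(x_{t+1}, x_t)$; the contraction \eqref{eq:breg_stationarity} turns this into a telescoping sum over $t = 0, \dots, T-1$, and Jensen's inequality finishes the argument.
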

\removesomewhitespace
\begin{proof}
    Using relative smoothness together with Lemma~\ref{prop:breg_stationarity},  we have
    \aligns{
        L D(x_{t+1}, x_t) 
        \geq f(x_{t+1}) - (f(x_t) + \lin{g(x_t), x_{t+1} - x_t}) 
       \stackrel{\smash{\eqref{eq:general-mirror-descent-step}}}{=} 
        f(x_{t+1}) - f_{\star}.
    }
    Combined with the contraction of the divergence to \(x_*\) in~\eqref{eq:breg_stationarity} 
    from Lemma~\ref{prop:breg_stationarity}, we get
    \aligns{
        D(x_{\star}, x_{t+1}) 
        \leq 
        D(x_{\star}, x_t) - \tfrac{1}{L} \paren{f(x_{t+1}) - f_{\star}}.
    }
    Iterating and using Jensen to bound \(f(\bar x_T)\) yields
    (this bound also holds for the best iterate)
    \aligns{
        f(\bar x_T) - f_{\star}
        \leq \frac{1}{T} \sum_{t=1}^T \paren{f(x_{t}) - f_{\star}}
        \leq \frac{L D(x_{\star}, x_0)}{T}.
        \tag*{\jmlrQED}
    }
    \noqed
\end{proof}

\vspace{-1.5em}

\subsection{Relative smoothness and strong convexity}
\label{sec:rel-smooth-strongly-convex}

In the relatively smooth and strongly convex case, we obtain a linear convergence rate. 
The proof is more involved than in the Euclidean case,
as we do not have a constant lower bound on the step-sizes~\(\alpha_t\)
unless $h$ is strongly convex with respect to a norm.

\begin{theorem}
    \label{thm:smooth-strongly-convex}
    If $f$ is $L$-smooth and $\mu$-strongly convex relative to $h$, 
    mirror Polyak guarantees
    \aligns{
        f(x_{t}) - f_{\star}
        \leq
        \paren{
            1 - \frac{\mu}{L+\mu}
        }^t 
        \max\braces{ 
            f(x_0) - f_{\star}, L D(x_{\star}, x_0)
        }.
    }
\end{theorem}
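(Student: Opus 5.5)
The plan is to combine the two one-step inequalities already available from Lemma~\ref{prop:breg_stationarity} and the proof of Theorem~\ref{thm:smooth-convex}, and then close the recursion with strong convexity. Write \(D_t \coloneqq D(x_{\star}, x_t)\) and \(F_t \coloneqq f(x_t) - f_{\star}\). As in the proof of Theorem~\ref{thm:smooth-convex}, relative smoothness and the hyperplane condition~\eqref{eq:general-mirror-descent-step} give \(F_{t+1} \leq L D(x_{t+1}, x_t)\). Plugging this into~\eqref{eq:breg_stationarity} yields
\[
    D_{t+1} \leq D_t - \tfrac{1}{L} F_{t+1}.
\]
If we also had \(F_{t+1} \geq \mu D_{t+1}\), this would rearrange to \(D_{t+1} \leq \frac{L}{L+\mu} D_t = \paren{1 - \frac{\mu}{L+\mu}} D_t\), which is exactly the claimed rate. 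Combined with \(F_{t} \leq L D(x_{t},x_{t-1}) \leq L D_{t-1}\), this would then transfer to the function values.

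The main obstacle is the lower bound \(F_{t+1} \geq \mu D_{t+1}\). Relative strong convexity with base point \(x_{\star}\) only gives \(F \geq \mu D(x, x_{\star})\), which has the Bregman arguments in the wrong order. Moreover, there is no uniform lower bound on \(\alpha_t\) without a norm. This is presumably why the statement carries the potential \(\Phi_t \coloneqq \max\braces{F_t, L D_t}\). I would therefore prove \(\Phi_{t+1} \leq \frac{L}{L+\mu}\Phi_t\) by a case split at each step.

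In the first case, \(F_{t+1} \geq \mu D_{t+1}\). The display above gives \(L D_{t+1} \leq \frac{L}{L+\mu} L D_t\). For the function value, the first thing I would try is to use \(F_{t+1} \leq L(D_t - D_{t+1})\) together with the case assumption, so that \(F_{t+1}(1 + L/\mu)\) is compared with \(L D_t\).

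In the second case, \(F_{t+1} < \mu D_{t+1}\), and I would exploit the step-size information instead. Relative strong convexity at \(x_t\), with \(y = x_{\star}\), combined with the hyperplane equation gives
\[
    \lin{g(x_t), x_{t+1} - x_{\star}} \geq \mu D_t .
\]
Feeding this into the three-point identity together with the normal-cone condition (exactly as in the proof of Lemma~\ref{prop:breg_stationarity}) yields the sharper contraction
\[
    D_{t+1} \leq (1 - \alpha_t \mu) D_t - D(x_{t+1}, x_t).
\]
Meanwhile~\eqref{eq:general-relative-polyak-condition} bounds \(\alpha_t F_t\) from below by the symmetrized divergence. My aim would be to show that in this case \(\alpha_t \geq 1/(L+\mu)\). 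I would try to obtain this by applying smoothness and strong convexity between \(x_t\) and \(x_{t+1}\) and summing the two resulting inequalities, which produces the symmetrized divergence that appears in~\eqref{eq:general-relative-polyak-condition}. This would give \(D_{t+1} \leq \frac{L}{L+\mu} D_t\) in this case as well.

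Finally, I would check that \(F_{t+1} \leq \mu D_{t+1} \leq L D_{t+1}\) keeps \(F_{t+1}\) below the potential, and unroll the recursion from \(\Phi_0 = \max\braces{f(x_0) - f_{\star}, L D(x_{\star}, x_0)}\).
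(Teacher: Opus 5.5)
\textbf{There is a genuine gap.} Everything rests on the inductive claim $\Phi_{t+1}\le\frac{L}{L+\mu}\Phi_t$. That claim is strictly stronger than the theorem, which only bounds $f(x_T)-f_\star$, and it is false, so no case split can prove it.

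In particular, your Case~2 target $\alpha_t\ge 1/(L+\mu)$ cannot be reached. Relative smoothness and strong convexity with base point $x_t$ only give $\mu D(x_{t+1},x_t)\le F_{t+1}\le L D(x_{t+1},x_t)$. Using base point $x_{t+1}$ brings in the uncontrolled $g(x_{t+1})$. The paper also stresses, and checks numerically in its appendix, that $\alpha_t$ can be arbitrarily small.

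A one-dimensional instance shows the failure concretely. Take $x_\star=0$ and $f=\mu D(\cdot,0)$, so $L=\mu$ is admissible. Let $h''$ carry a large mass $A$ concentrated near $u=\tfrac12$, a small mass $B\ll A$ near $u=1$, and a negligible positive background.

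In one dimension the hyperplane is a single point, so $x_{t+1}=x_t-(f(x_t)-f_\star)/f'(x_t)$. This is the $f''$-weighted mean of $[0,x_t]$. From $x_t=1$ it is $\approx\tfrac12+\tfrac{B}{2(A+B)}$, just past the large mass. Use $D(0,x)=\int_0^x u\,h''(u)\,du$ and $f(x)-f_\star=\int_0^x (x-u)f''(u)\,du$. Then:
\begin{itemize}
\item $D_t\approx\tfrac A2+B$ and $D_{t+1}\approx\tfrac A2$;
\item $F_t\approx\tfrac{\mu A}{2}$, so $\Phi_t=LD_t$;
\item $F_{t+1}\approx\tfrac{\mu B}{2}<\mu D_{t+1}$, so you are in Case~2;
\item $\alpha_t\approx\tfrac{B}{\mu A}\ll\tfrac{1}{2\mu}$;
\item $\Phi_{t+1}/\Phi_t\ge D_{t+1}/D_t\to 1$ as $B/A\to 0$, rather than being at most $\tfrac12$.
\end{itemize}

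Your Case~1 function-value step also fails as written. The case hypothesis gives an \emph{upper} bound on $D_{t+1}$, whereas using $F_{t+1}\le L(D_t-D_{t+1})$ would require a lower bound. It can be repaired via $F_{t+1}\le LD(x_{t+1},x_t)\le\min\{L\alpha_t F_t,\,L(1-\alpha_t\mu)D_t\}$, but that does not rescue Case~2.

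\textbf{The missing idea} is to give up on a one-step potential and keep two multiplicative chains along the whole trajectory, as the paper does.
\begin{itemize}
\item From $F_{t+1}\le LD(x_{t+1},x_t)\le L\alpha_t F_t$ you get $F_T\le\prod_{t<T}(L\alpha_t)\,F_0$.
\item From $D_{t+1}\le(1-\alpha_t\mu)D_t-D(x_{t+1},x_t)$ you get $F_T\le LD(x_T,x_{T-1})\le L\prod_{t<T}(1-\alpha_t\mu)\,D_0$.
\end{itemize}
Hence $F_T\le\min\{\prod_t L\alpha_t,\ \prod_t(1-\alpha_t\mu)\}\max\{F_0,LD_0\}$. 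By AM--GM each product is at most the $T$-th power of its arithmetic mean. Both factors are affine in $\alpha_t$, so with $\bar\alpha$ the average step, $\min\{L\bar\alpha,\,1-\mu\bar\alpha\}\le\frac{L}{L+\mu}$, with the crossing at $\bar\alpha=\frac{1}{L+\mu}$.

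In the example above, the tiny step leaves $D$ essentially unchanged but multiplies $F$ by about $L\alpha_t\approx B/A$. The first chain records this gain and cashes it in at the end, while a per-step maximum cannot.
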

\removesomewhitespace
If $f$ is $\mu$-strongly convex relative to $h$, 
we might expect mirror descent to converge linearly~\citep[][Thm. 3.1]{lu2018relatively}
as the decrease in divergence~\eqref{eq:breg_stationarity} 
becomes
\aligns{
    D(x_{\star}, x_{t+1}) 
    &= 
    D(x_{\star}, x_t) + \lin{\alpha_t g(x_t), x_{\star} - x_t} + D(x_t, x_{t+1}),
    \\
    &\leq
    (1-\alpha_t \mu) D(x_{\star}, x_{t}) - \alpha_t \paren{f(x_t) - f_{\star}} + D(x_t, x_{t+1}),
    \tag*{(by strong convexity)}
    \\
    &\leq 
    (1-\alpha_t \mu) D(x_{\star}, x_{t}) - D(x_{t+1}, x_t).
    \tag*{(Using \eqref{eq:general-relative-polyak-condition})}
}
This inequality would give a linear rate
if we had a lower bound on the step-sizes \(\alpha_t\).
If $h$ is strongly convex with respect to a norm,
smoothness implies that~$L\paren{f(x_t) - f_{\star}} \geq \frac{1}{2} \norm*{g(x_t)}_*^2$
and $\alpha_t \geq \nicefrac{1}{2L}$,
but this bound does not hold in the relative setting%
~\citep[see][Lemma 3]{dragomir2021fast}.
We verify numerically
that the step-size can indeed be arbitrarily small
in Appendix~\ref{apx:pepit},
which calls for another proof technique.

\medskip

\begin{proof}
Our proof strategy is to show that 
if $\alpha_t$ is consistently large, 
we make significant progress in the divergence, as iterating the divergence contraction discussed above implies
\begin{subequations}
\begin{equation}
    D(x_{\star}, x_{T}) 
    \leq \bigl(
    \textstyle
    \prod_{t=0}^{T-1} (1 - \alpha_t \mu) 
    \bigr)
    D(x_{\star}, x_0) - D(x_{T}, x_{T-1}),
    \label{eq:rel-sc-divergence-rate}
\end{equation}
but if $\alpha_t$ is consistently small,
we make significant progress in function value,
i.e.
\alignn{
    f(x_{t+1}) - f_{\star} 
    \leq L \alpha_t \paren{f(x_t) - f_{\star}},
    \quad 
    f(x_{T}) - f_{\star} 
    \leq 
    \bigl(
        \textstyle
        \prod_{t=0}^{T-1} L \alpha_t
    \bigr)
    \paren{f(x_0) - f_{\star}}.
    \label{eq:rel-sc-function-rate}
}
\end{subequations}
Before showing how to combine those inequalities to obtain~\Cref{thm:smooth-strongly-convex},
we first prove~\cref{eq:rel-sc-function-rate}.
Combining relative smoothness 
with the fact that
$f(x_t) + \lin{g(x_t), x_{t+1} - x_t} = f_{\star}$~\eqref{eq:general-mirror-descent-step},
\alignn{
    \label{eq:opt_gap_divergence_smoothness}
    f(x_{t+1}) - f_{\star} 
    &\leq f(x_t) + \lin{g(x_t), x_{t+1} - x_t} + L D(x_{t+1}, x_t)
    - f_{\star}
    \stackrel{\smash{\eqref{eq:general-mirror-descent-step}}}{=}
    L D(x_{t+1}, x_t).
}
Using the property of the mirror Polyak step-size \eqref{eq:general-relative-polyak-condition},
\aligns{
    \alpha_t \paren{f(x_t) - f_{\star}}
    \stackrel{\smash{\eqref{eq:general-relative-polyak-condition}}}{\geq}
    \lin{\hat x_{t+1} - \hat x_t, x_{t+1} - x_t}
    =
    D(x_{t+1}, x_t) + D(x_t, x_{t+1}),
}
which implies $D(x_{t+1}, x_t) \leq \alpha_t \paren{f(x_t) - f_{\star}}$.
Combining both inequalities gives~\cref{eq:rel-sc-function-rate}.
To combine both types of progress, note that the progress in~\eqref{eq:rel-sc-divergence-rate} together with~\eqref{eq:opt_gap_divergence_smoothness} imply 
\begin{equation*}
f(x_T) - f_{\star} \leq L D(x_T, x_{T-1}) \leq L \Big(\textstyle \prod_{t=0}^{T-1} (1 - \alpha_t \mu)\Big)
    D(x_{\star}, x_0)    
\end{equation*}
Since $f(x_T) - f_{\star}$ is also bounded as in~\eqref{eq:rel-sc-function-rate},
it is bounded by the minimum of the two, so
\aligns{
    f(x_{T}) - f_{\star} 
    &\leq
    \min\Bigl\{
        \textstyle
        \paren{\prod_{t=0}^{T-1} L \alpha_t} \paren{f(x_0) - f_{\star}}, 
        L \paren{\prod_{t=0}^{T-1} \paren{1-\alpha_t\mu}} D(x_{\star}, x_0)
    \Bigr\},
    \\
    &\leq
    \rho^{\smash{T}}
    \max\braces{
        f(x_0) - f_{\star},
        L D(x_{\star}, x_0)
    },
}
where $\rho$ is the smaller of the two (geometric) average contraction rates, 
\aligns{
    \rho 
    \coloneqq
    \min\Bigl\{
        \textstyle
        \prod_{t=0}^{T-1} L \alpha_t , 
        \prod_{t=0}^{T-1} (1-\alpha_t\mu)
    \Bigr\}^{\!{\nicefrac{1}{T}}}
    \, \stackrel{\text{AM-GM}}{\leq} \,\,
    \frac{1}{T} \min \Bigl\{
        \textstyle
        \sum_{t = 0}^{T-1} L \alpha_t,
        \sum_{t = 0}^{T-1} (1 - \alpha_t \mu)
    \Bigr\}.
}
Since \(L \alpha_t\) is increasing 
and \((1 - \alpha_t \mu)\) is decreasing in $\alpha_t$, 
the worst case $\alpha_t^*$ is such that
\aligns{
    L \sum_{t = 0}^{T-1} \alpha_t^* = \sum_{t = 0}^{T-1} (1 - \alpha_t^* \mu)
    \implies \sum_{t = 0}^{T-1} \alpha_t^* = \frac{T}{L + \mu}.
}
Therefore, the average contraction rate is at most
$\rho \leq \frac{1}{T} \cdot L \cdot \frac{T}{L + \mu} = \frac{L}{L + \mu} = 1 - \frac{\mu}{ L + \mu}$.
\end{proof}

\vspace{-1em}

\subsection{Convergence rate under relative Lipschitz continuity}
\label{sec:rel-lipschitz}

If $f$ is Lipschitz continuous relative to $h$,
we get the expected~\(O(1/\sqrt{T})\)~rate.
\begin{theorem}
    \label{thm:lipschitz-convex}
    If \(f\) is \(G\)-Lipschitz relative to \(h\), 
    mirror Polyak guarantees
    \begin{equation*}
        f(\bar x_T) - f_{\star} \leq \frac{G\sqrt{2 D(x_{\star}, x_0)}}{\sqrt{T}},
        \quad \text{ for } \quad 
        \bar x_T \coloneqq \frac{1}{T} \sum_{t=0}^{T-1} x_{t}
    \end{equation*}
\end{theorem}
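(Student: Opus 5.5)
We prove the bound $f(x_t) - f_{\star} \leq G\sqrt{2 D(x_{t+1}, x_t)}$ for each iterate, and then sum it against the telescoping divergence decrease \eqref{eq:breg_stationarity} from Lemma~\ref{prop:breg_stationarity}, closing with Cauchy--Schwarz and Jensen.

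\emph{Per-step bound.} By the hyperplane condition in \eqref{eq:general-mirror-descent-step}, $f(x_t) - f_{\star} = \lin{g(x_t), x_t - x_{t+1}}$. Since $x_t \in \interior(\setX)$ and $x_{t+1} \in \setX$, relative Lipschitz continuity applied with $x = x_t$ and $y = x_{t+1}$ gives
\begin{equation*}
    f(x_t) - f_{\star} = \lin{g(x_t), x_t - x_{t+1}} \leq G\sqrt{2 D(x_{t+1}, x_t)}.
\end{equation*}
Squaring yields $D(x_{t+1}, x_t) \geq (f(x_t) - f_{\star})^2/(2G^2)$. Here $f(x_t) \geq f_{\star}$ because $x_t \in \setC$, so the left side is nonnegative and squaring preserves the inequality.

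\emph{Telescoping.} Plug this into \eqref{eq:breg_stationarity} to get
\begin{equation*}
    D(x_{\star}, x_{t+1}) \leq D(x_{\star}, x_t) - \frac{(f(x_t) - f_{\star})^2}{2G^2}.
\end{equation*}
Sum for $t = 0, \dots, T-1$ and drop the nonnegative term $D(x_{\star}, x_T)$. This gives $\sum_{t=0}^{T-1} (f(x_t) - f_{\star})^2 \leq 2G^2 D(x_{\star}, x_0)$.

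\emph{Conclusion.} By Jensen (convexity of $f$) and then Cauchy--Schwarz,
\begin{equation*}
    f(\bar x_T) - f_{\star} \leq \frac{1}{T}\sum_{t=0}^{T-1} (f(x_t) - f_{\star}) \leq \sqrt{\frac{1}{T}\sum_{t=0}^{T-1} (f(x_t) - f_{\star})^2} \leq \frac{G\sqrt{2 D(x_{\star}, x_0)}}{\sqrt{T}}.
\end{equation*}
The same bound holds for the best iterate. This uses iterates $x_0, \dots, x_{T-1}$, which matches the definition of $\bar x_T$ in the statement.

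The only delicate point is the first step. We need the hyperplane equality to turn the function gap into the inner product $\lin{g(x_t), x_t - x_{t+1}}$, and we need Lemma~\ref{prop:breg_stationarity} to place $x_{t+1}$ in $\interior(\setX)$ so that $x_{t+1}$ is a valid point for the definition. Both facts are already given by Lemma~\ref{prop:breg_stationarity}, so no step-size lower bound is required.
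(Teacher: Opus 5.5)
Your proposal is correct and follows the paper's proof essentially step for step. The hyperplane equality and relative Lipschitz continuity give $2G^2 D(x_{t+1}, x_t) \geq (f(x_t) - f_{\star})^2$; this is telescoped through \eqref{eq:breg_stationarity} and closed with Jensen and the root-mean-square bound. (One small slip in your closing remark: the relative Lipschitz inequality needs its base point $x_t$ in $\interior(\setX)$, while $x_{t+1}$ only needs to lie in $\setX$; the Lemma placing $x_{t+1}$ in the interior matters for the next step, as you correctly used in your per-step bound.)
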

\removesomewhitespace
\begin{proof}
    Relative Lipschitz continuity and 
    $f(x_t) + \lin{g(x_t), x_{t+1} - x_t} = f_{\star}$ (\cref{eq:general-mirror-descent-step}) yield
    \begin{align*}
        2G^2 D(x_{t+1}, x_t) 
        \geq \lin{g(x_t), x_t - x_{t+1}}^2
        \stackrel{\eqref{eq:general-mirror-descent-step}}{=}
        (f(x_t) - f_{\star})^2.
    \end{align*}
    Combined with the contraction of divergence~\eqref{eq:breg_stationarity}, we get
    \aligns{
        D(x_{\star}, x_{t+1}) 
        \leq 
        D(x_{\star}, x_t) - \tfrac{1}{2G^2} \paren{f(x_{t}) - f_{\star}}^2.
    }
    Iterating and using Jensen's inequality yields
    (this bound also holds for the best iterate)
    \aligns{
        f(\bar x_T) - f_{\star}
        \leq \smash{\frac{1}{T}\sum_{t=0}^{T-1}} f(x_{t}) - f_{\star}
        \leq \sqrt{\textstyle\frac{1}{T} \sum_{t=0}^{T-1} \paren{f(x_{t}) - f_{\star}}^2}
        \leq \frac{G\sqrt{2 D(x_{\star}, x_0)}}{\sqrt{T}}.
        \tag*{\jmlrQED}
    }
    \noqed
\end{proof}

\vspace{-1em}

\subsection{Relative Lipschitz continuity and strong convexity}
\label{sec:rel-lipschitz_str_cvx}

In the Euclidean case, 
the Polyak step-size achieves an \(O(1/T)\) convergence rate 
for the best iterate or for a tail-averaged iterate~\citep{hazan2019revisiting}.
We show a similar, although slightly weaker result 
in the relative setting: an \(O(\log T/T)\) rate for the best iterate.

\pagebreak[2]

\begin{theorem}
    \label{thm:lipschitz-strongly-convex}
    If \(f\) is \(G\)-Lipschitz continuous and $\mu$-strongly convex relative to~\(h\) on a set~$\setC$, 
    mirror Polyak guarantees that for $T > 1$,
    \aligns{
        \min_{t\leq T} f(x_t) - f_{\star}
        \leq \smash{\frac{4G^2}{\mu} \frac{\log(T)}{T}}.
    }
\end{theorem}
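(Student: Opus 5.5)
The plan is to combine the two contraction mechanisms already used above into a single recursion for $D_t \coloneqq D(x_{\star}, x_t)$ and the gaps $\delta_t \coloneqq f(x_t) - f_{\star}$. Then I argue by contradiction: if every $\delta_t$ with $t \le T$ were larger than $\varepsilon \coloneqq \frac{4G^2}{\mu}\frac{\log T}{T}$, the divergence would be driven below zero.

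\emph{Step 1: a lower bound on the step-size.} We cannot hope for a constant lower bound on $\alpha_t$, as discussed in \Cref{sec:rel-smooth-strongly-convex}, but we can get one proportional to $\delta_t$. By \eqref{eq:general-relative-polyak-condition}, $\alpha_t \delta_t \ge D(x_{t+1},x_t) + D(x_t,x_{t+1}) \ge D(x_{t+1},x_t)$. As in the proof of \Cref{thm:lipschitz-convex}, relative Lipschitz continuity and \eqref{eq:general-mirror-descent-step} give $D(x_{t+1},x_t) \ge \delta_t^2/(2G^2)$. Combining, $\alpha_t \ge \delta_t/(2G^2)$ whenever $\delta_t > 0$.

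\emph{Step 2: the key recursion.} Start from the strongly convex decrease derived before the proof of \Cref{thm:smooth-strongly-convex}, namely $D_{t+1} \le (1-\alpha_t\mu) D_t - D(x_{t+1},x_t)$. This derivation only uses the three-point identity, strong convexity and \eqref{eq:general-relative-polyak-condition}. In the constrained case I would recheck it with the normal-cone term, which has the right sign since $x_{\star}\in\setC$. Plugging in Step 1 and the Lipschitz bound gives
\begin{equation*}
    D_{t+1} \le \Bigl(1 - \tfrac{\mu \delta_t}{2G^2}\Bigr) D_t - \tfrac{\delta_t^2}{2G^2}.
\end{equation*}

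\emph{Step 3: a bounded initial divergence.} Strong convexity at $x_t$ evaluated at $y = x_{\star}$ gives $\lin{g(x_t), x_t - x_{\star}} \ge \delta_t + \mu D_t \ge \mu D_t$. Relative Lipschitz continuity bounds the left side by $G\sqrt{2D_t}$. Hence $D_t \le 2G^2/\mu^2$ for every $t$, in particular for $D_0$. This makes the bound independent of the initialization, as in the statement.

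\emph{Step 4: contradiction.} Suppose $\delta_t > \varepsilon$ for all $t \le T$. Unrolling the recursion and keeping only the last subtracted term (all other terms are nonpositive, since $D_t \ge 0$) gives
\begin{equation*}
    0 \le D_T \le \Bigl(1-\tfrac{\mu\varepsilon}{2G^2}\Bigr)^{T} D_0 - \tfrac{\varepsilon^2}{2G^2}.
\end{equation*}
Bounding the product by an exponential, this yields $\frac{\varepsilon^2}{2G^2} \le e^{-\mu\varepsilon T/(2G^2)}\frac{2G^2}{\mu^2}$. With the chosen $\varepsilon$, the exponential equals $T^{-2}$, so the right side is $\frac{2G^2}{\mu^2 T^2}$. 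The left side is $\frac{8G^2\log^2 T}{\mu^2 T^2}$, which is strictly larger as soon as $\log^2 T > 1/4$. This holds for every integer $T \ge 2$, since $\log 2 \approx 0.69$.

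The main obstacle is Step 1–2: because $\alpha_t$ has no uniform lower bound in the relative setting, the contraction factor $1-\alpha_t\mu$ must instead be tied to the current gap $\delta_t$. This is also why only a best-iterate bound with a $\log T$ factor comes out, rather than the Euclidean $O(1/T)$. The remaining care is in the index bookkeeping: whether the minimum runs over $t \le T$ or $t < T$, and whether one needs $T$ or $T+1$ steps of the recursion. This only affects constants, and the slack in $\log^2 T > 1/4$ absorbs it.
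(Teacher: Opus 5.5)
Your proof is correct and follows essentially the same route as the paper: argue by contradiction, use the gap-proportional step-size bound $\alpha_t \geq \delta_t/(2G^2)$ to contract $D_T \le e^{-2\log T} D_0 = D_0/T^2$, and close with the diameter bound $D_0 \le 2G^2/\mu^2$. The only difference is the last step, where the small divergence is turned into a small gap. You keep the term $-D(x_T,x_{T-1}) \le -\delta_{T-1}^2/(2G^2)$ and use $D_T \ge 0$. The paper drops that term and instead uses $\delta_T \le G\sqrt{2D_T}$. Both give the same threshold $T > e^{1/2}$, and your version only uses the gaps $\delta_0,\dots,\delta_{T-1}$.
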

\removesomewhitespace
Combining Lipschitz continuity and strong convexity requires care, 
as both assumptions cannot hold simultaneously on unbounded domains
even in the Euclidean case.
The proof of \Cref{thm:lipschitz-strongly-convex}
relies on the following bounds on the constraint set $\setC$ 
that are implied by the assumptions of relative Lipschitz continuity and strong convexity. 
\begin{lemma}
    \label{lem:lipschitz-strongly-convex-properties}
    \begin{subequations}%
        If $f$ is $G$-Lipschitz continuous relative to $h$
        on a set $\setC$,
        mirror Polyak satisfies
        \alignn{
            \alpha_t
            &\geq
            \frac{1}{2G^2}\paren{f(x_t) - f_{\star}} \quad \text{and}
            \label{eq:step-size-lower-bound-lipschitz}
            \\
            f(x) - f_{\star} 
            &\leq
            G\sqrt{2D(x_{\star}, x)}
            \qquad \text{for all}~x \in \setC.
            \label{eq:optgap-bounded-by-distance}
        }
        Additionally, if $f$ is also $\mu$-strongly convex relative to $h$ on $\setC$, then
        \begin{equation}
            D(x_{\star}, x) 
            \leq \frac{2G^2}{\mu^2} 
            \qquad 
            \text{for all}~x \in \setC.
            \label{eq:diameter-bounded}
        \end{equation}
    \end{subequations}
\end{lemma}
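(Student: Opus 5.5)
Each of the three bounds follows from the relative-geometry definitions evaluated at a well-chosen pair of points, together with \cref{eq:general-mirror-descent-step,eq:general-relative-polyak-condition} of Lemma~\ref{prop:breg_stationarity}.

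\textbf{Step-size bound~\eqref{eq:step-size-lower-bound-lipschitz}.} Apply relative Lipschitz continuity with $x = x_t$ and $y = x_{t+1}$. By the hyperplane condition in~\eqref{eq:general-mirror-descent-step},
\[
\lin{g(x_t), x_t - x_{t+1}} = f(x_t) - f_{\star},
\]
so $(f(x_t) - f_{\star})^2 \leq 2G^2 D(x_{t+1}, x_t)$, exactly as in the proof of \Cref{thm:lipschitz-convex}. Next, \eqref{eq:general-relative-polyak-condition} gives
\[
D(x_{t+1}, x_t) \leq D(x_{t+1}, x_t) + D(x_t, x_{t+1}) \leq \alpha_t (f(x_t) - f_{\star}).
\]
Chaining the two gives $(f(x_t)-f_{\star})^2 \leq 2G^2 \alpha_t (f(x_t)-f_{\star})$. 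If $f(x_t) > f_{\star}$, dividing yields the claim. If $f(x_t) = f_{\star}$, the claim is just $\alpha_t \geq 0$, which holds trivially.

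\textbf{Gap bound~\eqref{eq:optgap-bounded-by-distance}.} Use convexity, then relative Lipschitz continuity with $y = x_{\star}$:
\[
f(x) - f_{\star} \leq \lin{g(x), x - x_{\star}} \leq G\sqrt{2D(x_{\star}, x)}.
\]
The definition is only stated for $x \in \interior(\setX)$. For general $x \in \setC$, I would either restrict to the iterates, which lie in the interior, or pass to a limit via lower semicontinuity. This is a minor technicality that I would mention and not belabour.

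\textbf{Diameter bound~\eqref{eq:diameter-bounded}.} Apply relative strong convexity with $y = x_{\star}$ and base point $x$:
\[
f_{\star} \geq f(x) + \lin{g(x), x_{\star} - x} + \mu D(x_{\star}, x).
\]
Rearranging gives
\[
\mu D(x_{\star}, x) \leq \lin{g(x), x - x_{\star}} - (f(x) - f_{\star}) \leq \lin{g(x), x - x_{\star}},
\]
where the last step uses $f(x) \geq f_{\star}$ on $\setC$. Relative Lipschitz continuity bounds the right side by $G\sqrt{2D(x_{\star},x)}$. So $\mu D \leq G\sqrt{2D}$ with $D = D(x_{\star}, x)$. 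Squaring and dividing by $D$ (the case $D = 0$ is trivial) gives $D \leq 2G^2/\mu^2$.

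None of these steps is a real obstacle. The only point needing care is the domain issue: the definitions are stated for $x \in \interior(\setX)$ while the lemma is claimed for all $x \in \setC$.
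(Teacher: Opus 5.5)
Your proposal is correct and follows the paper's proof essentially step for step. The step-size bound comes from relative Lipschitz continuity at $(x_t, x_{t+1})$ combined with the hyperplane condition and \eqref{eq:general-relative-polyak-condition}, and the other two bounds come from the chain $G\sqrt{2D(x_{\star}, x)} \geq \langle g(x), x - x_{\star}\rangle \geq f(x) - f_{\star} + \mu D(x_{\star}, x) \geq 0$; using plain convexity (the $\mu = 0$ case) for \eqref{eq:optgap-bounded-by-distance}, and noting that downstream only the iterates in $\interior(\setX)$ are needed, are accurate small refinements of the paper's terser argument.
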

\removesomewhitespace
\begin{proof}
    For~\cref{eq:step-size-lower-bound-lipschitz},
    using the definition 
    of the mirror Polyak step-size~\eqref{eq:general-relative-polyak-condition}, 
    we have 
    \aligns{
        \alpha_t
        &\geq 
        \frac{\lin{\hat x_t - \hat x_{t+1}, x_t - x_{t+1}}}{f(x_t) - f_{\star}}
        = 
        \frac{D(x_{t+1}, x_t) + D(x_t, x_{t+1})}{f(x_t) - f_{\star}}
        \geq
        \frac{D(x_{t+1}, x_t)}{f(x_t) - f_{\star}}.
    }
    We get ~\cref{eq:step-size-lower-bound-lipschitz} 
    by relative Lipschitz continuity
    and the mirror Polyak step-size~\eqref{eq:general-mirror-descent-step},~as
    \alignn{
        \label{lem:bounded-distance}
        D(x_{t+1}, x_t) \geq
        \frac{1}{2G^2} \lin{g(x_t), x_{t+1} - x_t}^2
        \stackrel{\smash{\eqref{eq:general-mirror-descent-step}}}{=}
        \frac{1}{2G^2} \paren{f(x_t) - f_{\star}}^2,
    }
    For \cref{eq:optgap-bounded-by-distance,eq:diameter-bounded},
    by relative Lipschitz continuity and strong convexity,
    \aligns{
        G\sqrt{2D(x_{\star}, x)}
        \geq \lin{g(x), x - x_{\star}}
        \geq f(x) - f(x_{\star}) + \mu D(x_{\star}, x)
        \geq 0.
    }
    This gives~\cref{eq:optgap-bounded-by-distance} as $D(x_{\star}, x) \geq 0$, 
    and~\cref{eq:diameter-bounded} as $f(x) - f(x_{\star}) \geq 0$.
\end{proof}

\begin{proof} \textbf{of \cref{thm:lipschitz-strongly-convex}}
    For conciseness, we define
    $d_t \coloneqq D(x_{\star}, x_t)$ and $\gamma_t \coloneqq f(x_t) - f_{\star}$.
    Using strong convexity as in \Cref{thm:smooth-strongly-convex}
    and the lower bound on the step-size~\eqref{eq:step-size-lower-bound-lipschitz} yields
    \aligns{
        d_{t+1} 
        &
        \leq
        (1 - \alpha_t \mu) d_t - D(x_{t+1}, x_t)
        <
        (1 - \alpha_t \mu) d_t
        \stackrel{
            \smash{
                \eqref{eq:step-size-lower-bound-lipschitz}
            }
        }{\leq} 
        \,
        \paren{1 - \frac{\mu}{2G^2} \gamma_t} d_t.
    }
    Iterating the contraction 
    and using that $1-x \leq e^{-x}$,
    we have 
    \aligns{
        d_{T} 
        \leq
        \prod_{t=0}^{T-1} \paren{1 - \frac{\mu}{2G^2} \gamma_t} d_0
        \leq
        \exp\paren{\textstyle
            - \frac{\mu}{2G^2} \sum_{t=0}^{T-1} \gamma_t
        } d_0.
    }
    We can then proceed by contradiction. 
    Suppose that all the optimality gaps are large, that is, 
    assume~$\gamma_t \geq (\nicefrac{4G^2}{\mu})(\nicefrac{\log(T)}{T})$ 
    for all $t \leq T$.
    Then, the bound on the divergence \(d_T\) yields 
    \aligns{
        d_{T} 
        \leq
        \exp\paren{- 2 \log(T)} d_0
        = 
        \frac{d_0}{T^2}.
    }
    To bound the optimality gap, 
    we use that $\gamma_T \leq G\sqrt{2 d_T}$~\eqref{eq:optgap-bounded-by-distance}
    and 
    $\sqrt{d_0} \leq \nicefrac{\sqrt{2}G}{\mu}$~\eqref{eq:diameter-bounded}
    to~get
    \aligns{
        \gamma_T 
        \leq 
        G\sqrt{2 d_T}
        \leq 
        G\sqrt{\frac{2 d_0}{T^2}}
        \leq 
        \frac{2G^2}{\mu}\frac{1}{T}.
    }
    To be consistent with our assumed lower bound 
    $\gamma_t \geq (\nicefrac{4G^2}{\mu})(\nicefrac{\log(T)}{T})$
    we must have that 
    \aligns{
        \frac{4G^2}{\mu} \frac{\log(T)}{T} 
        \leq \gamma_T \leq 
        \frac{2G^2}{\mu}\frac{1}{T},
        \quad \text{ which implies } \quad
        T \leq e^{1/2}.
    } 
    For $T \geq 2$, we have a contradiction 
    and at least one $\gamma_t$ must be smaller than \smash{$\frac{4G^2}{\mu} \frac{\log(T)}{T}$}.
\end{proof}

In the Euclidean case, we expect an $O(\log(T)/T)$ rate for the average iterate,
while the best iterate should converge at $O(1/T)$.
This distinction also holds for mirror descent 
with decreasing step-sizes~\citep{lu2019relativecontinuity}.
While it might be possible to improve the rate, 
it would require more refined inequalities 
than the ones in Lemma~\ref{lem:lipschitz-strongly-convex-properties},
as they can be satisfied by~%
$\gamma_t = \slashfrac{G^2\log(T)}{\mu T} $,
$d_{0} = \slashfrac{G^2}{\mu^2}$, 
and~$d_{t+1} = (1 - \slashfrac{\gamma_t\mu}{2G^2}) d_t$ 
for~large~$T$.

\section{Polyak step-size without knowing the optimal value}
\label{sec:lifting}

Now that we have established the convergence rates 
of mirror Polyak, we move on to the problem 
of using it when the minimum value $f_{\star}$ is unknown.
Many methods have been proposed to estimate $f_{\star}$ during optimization:
using decreasing upper bounds~\citep{held1974validationos}, 
the current objective value~\citep{bazaraa1981ontc},
or more complex target value algorithms~\citep{kim1990variable},
and many variants of these schemes have been 
proposed over the years~
\citep{%
brannlund1993relaxation,%
kiwiel1999efficiency,%
goffin1999convergence,%
sherali2000variable,%
nedic2001convergence,%
fumero2001modified,%
lorenz2014infeasible%
}.
But none preserves the convergence rate of the Polyak step-size. 
Some heuristics 
converge asymptotically but do not have anytime rates~\citep[e.g.][]{brannlund1993relaxation},
while restart schemes require fixing the number of iterations $T$ in advance,
or paying a $\log(T)$ factor with a doubling trick~\citep{hazan2019revisiting}.
To bypass the need to know $f_\star$,
we propose an alternative formulation.

Suppose that, in addition to our primal objective $f$, 
we have a dual such that strong duality holds,
that is, a convex function $q$ such that
$\min_x f(x) = \max_y -q(y)$.
Then, instead of minimizing $f$ directly,
we can optimize the~lifted~problem
\alignn{
    \label{eq:lifting_def}
    \min_{x, y} F(x, y) \quad \text{ where }  \quad F(x, y) \coloneqq f(x) + q(y).
}
The lifted problem $F$ is convex in the joint parameters $(x,y)$, 
has a minimum value of~$0$, 
and satisfies $F(x_t, y_t) = f(x_t) - f_{\star} - \inf_y q(y) + q(y_t) \geq f(x_t) - f_{\star}$
so that guarantees on~$F(x_t, y_t)$ 
transfer to~$f(x_t) - f_{\star}$.
On the example problems we consider,
the lifted problem is smooth and strongly convex relative to a reference function~$h$
with the same condition number as the primal problem
under a careful choice 
of reference function~$h(x, y) = \frac{1}{2}\norm{x}^2 + h_{\setY}(y)$.
We can thus apply mirror Polyak to the lifted problem
while preserving the worst-case guarantee of the Polyak step-size.
To be feasible, this approach requires a dual that can be evaluated as efficiently as the primal.
We give examples for a common class of regularized data-fitting problems 
which can be evaluated efficiently
using the same dual problem as SDCA~\citep{shalev2013stochastic}. We summarize the duality results and properties of the lifted problem in the next proposition.
\begin{proposition}%
    \label{prop:fenchel-dual}
    Let \(\phi\) and \(r\) be closed convex functions on \(\R^n\) and \(\R^d\), respectively, 
    with conjugates~$\phi^*$ and~$r^*$.
    Let $A \in \R^{n \times d}$, 
    \(\lambda > 0\) and the functions \(f\)~and~\(q\)~be
    \begin{equation*}
        f(x) \coloneqq \phi(Ax) + \lambda r(x) 
        \quad \text{and} \quad
        q(y) \coloneqq \phi^*(y) 
        + \lambda r^*\paren{-\frac{1}{\lambda} A^T y} 
        \qquad \forall x \in \R^d, \forall y \in \R^n.
    \end{equation*}
    Strong duality holds and $\min_x f(x) = \max_y -q(y)$ if
    there exists an $x$ in the relative interior of $\dom r$ 
    such that~$Ax$ is in the relative interior of $\dom \phi$~\citep[][Cor. 31.2.1]{rockafellar1970convex}. 
\end{proposition}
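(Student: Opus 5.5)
This is Fenchel--Rockafellar duality, so the plan is to write $f$ as a sum of two convex functions, one composed with a linear map, and then identify $q$ as the resulting dual objective. Set $\psi \coloneqq \lambda r$, so that $f(x) = \phi(Ax) + \psi(x)$ with $\psi^*(z) = \lambda r^*(z/\lambda)$. The dual is obtained by writing $f$ as a constrained problem,
\begin{equation*}
    \min_{x, u} \; \phi(u) + \psi(x) \quad \text{such that} \quad u = Ax,
\end{equation*}
and introducing a multiplier $y \in \R^n$ for the constraint. The Lagrangian is $\phi(u) - \lin{y, u} + \psi(x) + \lin{A^T y, x}$. Minimizing over $u$ gives $-\phi^*(y)$, and minimizing over $x$ gives $-\psi^*(-A^T y) = -\lambda r^*\paren{-\frac{1}{\lambda} A^T y}$. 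The dual function is therefore exactly $-q(y)$.

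Weak duality $-q(y) \leq f(x)$ for all $x, y$ then follows from Fenchel--Young applied to each term. Summing $\phi(Ax) + \phi^*(y) \geq \lin{y, Ax}$ and $\psi(x) + \psi^*(-A^Ty) \geq -\lin{A^T y, x}$ gives $f(x) + q(y) \geq 0$.

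For strong duality, I would invoke Rockafellar's Corollary 31.2.1, which is Fenchel's duality theorem with a linear map. It states that if some $x \in \relint(\dom \psi)$ has $Ax \in \relint(\dom \phi)$, then
\begin{equation*}
    \inf_x \phi(Ax) + \psi(x) = \sup_y -\phi^*(y) - \psi^*(-A^T y),
\end{equation*}
and that the supremum is attained. Since $\lambda > 0$, we have $\dom \psi = \dom r$, so the stated condition is exactly the qualification hypothesis of that corollary. The conclusion of the corollary, with the conjugate of $\psi$ rewritten as above, is the claimed identity.

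I expect the main obstacle to be bookkeeping rather than ideas. The signs and scaling must match Rockafellar's statement, which is written as $\inf f(x) - g(Ax)$ with a concave $g$; I would check this by setting $g = -\phi$ and translating the concave conjugate. The remaining detail is attainment on the primal side. The problem statement assumes a finite minimum $f_\star$, and the equality holds as an infimum/supremum regardless. The dual supremum is attained whenever the common value is finite, so $F$ in~\eqref{eq:lifting_def} has minimum value $0$ once $f_\star$ is attained.
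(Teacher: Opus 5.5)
Your proposal is correct and takes the same approach as the paper, which gives no argument beyond citing Rockafellar's Corollary 31.2.1. Your setup of $\psi = \lambda r$ with $\psi^*(z) = \lambda r^*(z/\lambda)$, the sign translation through the concave function $-\phi$, and the observation that $\dom(\lambda r) = \dom r$ are exactly the bookkeeping that citation leaves implicit. Your caveat is also apt: the qualification condition yields only $\inf_x f = \max_y -q(y)$, so writing $\min_x f$ relies on the paper's standing assumption that $f_\star$ is attained.
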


The lifting comes at the cost of an increased dimensionality
as we now have to optimize in both the $x$ and $y$ variables.
It is also not a black-box method 
since evaluating the lifting $F$ as in~\eqref{eq:lifting_def} requires more than a gradient oracle of $f$.
Nonetheless, $F$ has an optimum of $0$, enabling the use of the Polyak step-size.
Although $F$ may not have the same 
Euclidean smoothness and strong convexity constants as $f$,
making classical Polyak potentially slower,
we can in some cases preserve those 
properties relative to a proper choice of reference function
for mirror Polyak. 
The next proposition gives such a mirror map for 
smooth and strongly convex problems 
with~Euclidean~regularization.

\begin{proposition}
    \label{prop:lifting-smooth-strongly-convex}
    Let $f, q, \phi, r$ and $A$ be as in Proposition~\ref{prop:fenchel-dual}
    and define $L \coloneqq \lambda + \beta \norm*{A}_2^2$.
    If~\(r = \frac{1}{2}\norm*{\cdot}_2^2\) and~\(\phi\) is~\(\beta\)-smooth,
    then $f$ is $L$-smooth 
    and \(\lambda\)-strongly convex in Euclidean norm,
    and the lifted problem~$F(x,y) \coloneqq f(x) + q(y)$ 
    is also $L$-smooth and $\lambda$-strongly convex
    relative to the reference function
    $h(x,y) \coloneqq \frac{1}{2}\norm*{x}_2^2 + \frac{1}{\lambda} \phi^*(y)$.
\end{proposition}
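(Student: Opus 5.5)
The proof splits the lifted problem into its primal and dual blocks and verifies the relative inequalities for each separately. Because both $F(x,y)=f(x)+q(y)$ and $h(x,y)=\frac12\norm{x}_2^2+\frac1\lambda\phi^*(y)$ are separable, the Bregman divergence decomposes as $D_h((x',y'),(x,y)) = \frac12\norm{x'-x}_2^2 + \frac1\lambda D_{\phi^*}(y',y)$. The linearization of $F$ also splits into the two blocks. So it suffices to show that $f$ is $L$-smooth and $\lambda$-strongly convex relative to $\frac12\norm{\cdot}_2^2$, and that $q$ is $L$-smooth and $\lambda$-strongly convex relative to $\frac1\lambda\phi^*$. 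Adding the two pairs of inequalities then gives the claim for $F$.

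For the primal block, the argument is standard. With $r=\frac12\norm{\cdot}_2^2$ we have $f(x)=\phi(Ax)+\frac\lambda2\norm{x}_2^2$. Since $\phi$ is convex and $\beta$-smooth, $x\mapsto\phi(Ax)$ is convex and $\beta\norm{A}_2^2$-smooth, because $\phi(Ax')\le \phi(Ax)+\lin{\nabla\phi(Ax),A(x'-x)}+\frac\beta2\norm{A(x'-x)}^2$. The quadratic term is exactly $\lambda$-strongly convex and $\lambda$-smooth. Together these give $L=\lambda+\beta\norm{A}_2^2$ smoothness and $\lambda$-strong convexity in the Euclidean norm, which is also relative to $\frac12\norm{\cdot}_2^2$.

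For the dual block, $r^*=\frac12\norm{\cdot}_2^2$, so $q(y)=\phi^*(y)+\frac{1}{2\lambda}\norm{A^Ty}_2^2$. Relative to $\frac1\lambda\phi^*$, the first term $\phi^*=\lambda\cdot\frac1\lambda\phi^*$ is exactly $\lambda$-smooth and $\lambda$-strongly convex, since its Bregman divergence equals $\lambda D_{\frac1\lambda\phi^*}$. The second term is a convex quadratic with divergence $\frac1{2\lambda}\norm{A^T(y'-y)}_2^2\ge0$, so strong convexity with constant $\lambda$ is preserved. For smoothness we need $\frac1{2\lambda}\norm{A^T(y'-y)}_2^2\le \beta\norm{A}_2^2\cdot\frac1\lambda D_{\phi^*}(y',y)$, and this is the key step. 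It holds because $\phi$ being $\beta$-smooth makes $\phi^*$ $\frac1\beta$-strongly convex, so $D_{\phi^*}(y',y)\ge\frac1{2\beta}\norm{y'-y}_2^2$. Combined with $\norm{A^T(y'-y)}_2^2\le\norm{A}_2^2\norm{y'-y}_2^2$, the required bound follows, and the total smoothness constant is $\lambda+\beta\norm{A}_2^2=L$.

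The main obstacle is making the dual-block inequalities rigorous when $\phi^*$ is not differentiable everywhere or has a restricted domain, as happens for logistic regression. There, divergences are only defined for $y\in\interior\dom\phi^*$, so one must use $\nabla\phi^*(y)$ as the subgradient in $q$'s linearization and check that $h$ is Legendre on the appropriate set. On that set I would invoke the standard fact that $\beta$-smoothness of $\phi$ is equivalent to $\frac1\beta$-strong convexity of $\phi^*$, in its Bregman form on the domain interior. Everything else is additive bookkeeping of divergences.
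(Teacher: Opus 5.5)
Your proof is correct. The paper states this proposition without a written proof, and your argument is the natural one behind it. Separability of $F$ and $h$ splits $D_h$ into $\frac{1}{2}\norm{x'-x}_2^2+\frac{1}{\lambda}D_{\phi^*}(y',y)$. The only nontrivial step is the one you isolate: $\beta$-smoothness of $\phi$ makes $\phi^*$ strongly convex with modulus $\frac{1}{\beta}$, which gives $\frac{1}{2\lambda}\norm{A^T(y'-y)}_2^2\leq\frac{\beta\norm{A}_2^2}{\lambda}D_{\phi^*}(y',y)$. The same fact underlies the rescaling $h'_{\setY}=\frac{1}{4}h_{\setY}$ in Appendix~\ref{apx:experiments_details}, where $\beta=\frac{1}{4}$.

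The Hessian sandwich $\mu\nabla^2 h\preceq\nabla^2 f\preceq L\nabla^2 h$ in Appendix~\ref{apx:pepit} suggests the authors think of this as $\nabla^2\phi^*(y)\succeq\frac{1}{\beta}I$. Your divergence-level version is the same argument without needing second derivatives.

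Your regularity caveat is well placed. The statement tacitly needs $\phi^*$ to be differentiable on $\interior\dom\phi^*$, and Legendre to fit the paper's standing assumption on $h$. This holds in both examples but not for every $\beta$-smooth $\phi$. For instance, $\phi(z)=\frac{1}{2}\max\{\abs{z}-1,0\}^2$ is $1$-smooth, yet $\phi^*(y)=\abs{y}+\frac{1}{2}y^2$ is not differentiable at $0$.
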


Using this reference function, 
mirror Polyak on the lifted problem~$F$
preserves the worst-case guarantee 
that the idealized Polyak step-size knowing $f_{\star}$ 
would have on the primal problem~$f$.
We give two examples, 
first on regularized least-squares
where the lifting preserves Euclidean smoothness,
and on regularized logistic regression
where the dual is not smooth in any norm
but well-behaved relative to the~negative~entropy.

\begin{example}[Regularized least-squares]
    \label{ex:linear-regression}
    Consider the regularized least-squares objective
    with data~$A \in \R^{n \times d}$ and $b~\in \R^n$,
    primal variables~$x \in \R^d$ 
    and dual variables~$y \in \R^n$,
    \aligns{
        f(x) \coloneqq \tfrac{1}{2} \norm{Ax - b}^2 
        + \tfrac{1}{2} \norm{x}^2, 
        &&
        q(y) = \frac{1}{2} \norm{y}^2 + \lin{y, b} + \frac{1}{2} \big\lVert -A^T y\big\rVert^2.
    }
    The primal is $L \coloneqq (\norm{A}_2^2 + 1)$-smooth 
    and~\(1\)-strongly convex,
    but the optimal value~$f_{\star} \geq 0$ is unknown.
    Since $q$ is \(L\)-smooth and \(1\)-strongly convex, 
    $F$ %
    has the same condition number~as~$f$.
\end{example}

\begin{example}[Logistic regression]
    \label{ex:logistic-regression}
    Consider the regularized logistic regression objective
    with data $A \in \R^{n \times d}$ and $b \in \{-1,1\}^n$,
    primal variables $x \in \R^d$ 
    and dual variables $y \in [0,1]^n$:
    \aligns{
        f(x) &= \phi(b \odot Ax) + \lambda \tfrac{1}{2} \norm{x}^2,
        &
        q(y) &\coloneqq \phi^*(y) + \lambda \tfrac{1}{2} \norm{-\tfrac{1}{\lambda} A^T (b \odot y)}^2,
        \\
        \phi(z) &\coloneqq \sum_{i=1}^n \log(1 + \exp(z_i)),
        &
        \phi^*(y) &= \sum_{i=1}^n \big(y_i \log y_i + (1 - y_i) \log(1 - y_i)\big),
    }
    where~\(\odot\) denotes element-wise multiplication.
    The primal is~$L \coloneqq \paren*{ \norm{A}_2^2/4 + \lambda}$-smooth 
    and~\(\lambda\)-strongly convex,
    but $q$ is neither smooth nor Lipschitz in any norm,
    as~$\nabla \phi^*(y)$ diverges when any~$y_i$ approaches $0$ or $1$.
    But it is well-behaved relative to the negative entropy~$\phi^*(y)$;
    the lifted problem
    $F$ is $L$-smooth and $\lambda$-strongly convex relative 
    to~$h(x,y) = \frac{1}{2}\norm{x}^2 + \frac{1}{\lambda} \phi^*(y)$.

\end{example}

\section{Concluding remarks}
\label{sec:experiments}

We have provided an analysis of mirror Polyak,
showing that it adapts to the relative geometry of the problem 
like its Euclidean counterpart, 
and proposed a lifted problem based on a duality gap
to bypass the need to know the optimal value.
To conclude, we illustrate the theoretical results
on regularized linear and logistic regression problems.
Experiment details can be found in Appendix~\ref{apx:experiments_details}.
We compare 
mirror Polyak on the lifted problem
with methods that estimate~\(f_{\star}\);
a level estimation method~\citep{brannlund1993relaxation}
with and without hyperparameter tuning
and TwinPolyak \citep{abdukhakimov2025polyakstepsizeestimatingoptimal},
as well as an idealized Polyak step-size knowing~\(f_{\star}\).
\begin{figure}[t]
\centering
\includegraphics[width=\textwidth]{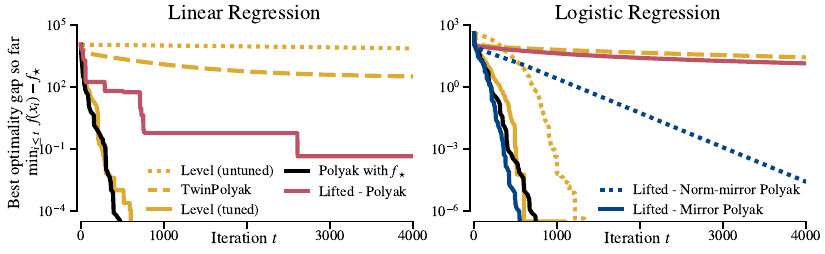}
\caption{
    Mirror Polyak on the duality gap converges without 
    having to know~\(f_{\star}\).
    Illustration on regularized linear 
    and logistic regression problems.
    On the logistic regression problem, 
    mirror Polyak outperforms the norm-based variant 
    as solving the projection leads to larger steps~(Prop.~\ref{prop:relative-polyak-larger}).
    Methods that estimate $f_\star$ perform well if tuned,
    but no setting works for all problems.
}
\label{fig:lifted_duality_gap_experiment}
\end{figure}

The results, shown in \Cref{fig:lifted_duality_gap_experiment},
illustrate the behaviors described earlier.
Mirror Polyak on the gap is fully specified by the problem structure
and choice of primal-dual gap,
while the performance of level estimation methods 
depends heavily on having their additional hyperparameters tuned.
We observe that mirror Polyak on the lifted problem
does not necessarily improve the primal gap at each step,
as it could only improve the dual gap.
In the logistic regression case,
as the reference is strongly convex in Euclidean norm,
the norm-based variant of mirror Polyak~\eqref{eq:norm-based-polyak}
is well-defined but makes less progress 
as it takes smaller steps than mirror 
Polyak~(see Proposition~\ref{prop:relative-polyak-larger}).

\paragraph{Stochastic extensions.}
Extensions of these ideas to the stochastic setting
remain open and of interest for applications in machine learning.
The Polyak step-size 
has been applied successfully 
for finite-sum problems under interpolation,
when all functions share a minimizer
and an optimal value of 0,
but is difficult to apply without interpolation~\citep{orvieto2022dynamics}.
Existing results assuming interpolation
or using the idealized variant of~\citet{gower2025analysis}
would show mirror Polyak converges asymptotically 
as it leads to a contraction~\citep[][Prop.~4.13]{bauschke2003bregman}, 
but its non-asymptotic behavior is unknown.
Our lifting technique 
might help improve existing schemes in the stochastic case~\citep{garrigos2023functionvaluelearning,jiang2023adaptive}.

\clearpage
\section*{Acknowledgements}
Frederik Kunstner is supported by a Marie Skłodowska-Curie Fellowship from the European Union's Horizon Europe Research and Innovation program under Grant Agreement No. 101210427.
Ryan D'Orazio's work is funded by Ioannis Mitliagkas' CIFAR chair.
Victor S. Portella's work was funded,
in part, by the São Paulo Research Foundation (FAPESP), Brazil, process number 2024/09381-1.
Adrien Taylor is supported by the European Union (ERC grant CASPER 101162889). The
French government also partly funded this work under the management of Agence Nationale de la
Recherche as part of the ``France 2030'' program, references ANR-23-IACL-0008 ``PR[AI]RIE-PSAI''.

\bibliography{sn-bibliography}%

\appendix

\begin{appendices}

\clearpage
\section{Constraint qualification for constrained mirror Polyak}
\label{apx:slater-condition}

\paragraph{Constraint qualification.}  
Let us show that the problem~\eqref{eq:constrained_polyak_def} satisfies constraint qualifications. That is, define the half-space $\cH_t$ by
\begin{equation*}
    \cH_t \coloneqq \{x \in \R^d \colon l_t(x) \coloneqq f(x_t) + \lin{g(x_t), x - x_t} \leq f_{\star}\}.
\end{equation*}
The problem in~\cref{eq:constrained_polyak_def} is a Bregman projection of $x_t$ onto the boundary of $\cH_t$ (intersected with $\cX \cap \cC$). Since $x_t \not\in \cH_t$ (assuming $x_t$ is not a minimum of $f$), one may verify that the projection onto the half-space $\cH_t$ or its boundary are equivalent. 
Moreover, since the above set is polyhedral, we conclude that Slater's condition for the problem in~\cref{eq:constrained_polyak_def} is
\begin{equation*}
    \cH_t \cap \relint(\cC \cap \cX) \neq \emptyset,
\end{equation*}
By assumption $\relint \cC \cap \interior \setX \neq \emptyset$, and thus $\relint (\cC \cap \setX) = \relint \cC \cap \interior \setX$ \citep[Theorem~6.5]{rockafellar1970convex}. Therefore, the above constraint qualification conditions  become
\begin{equation*}
    \cH_t \cap \relint(\cC) \cap \interior(\cX) \neq \emptyset,
\end{equation*}
To see that the above holds, simply  note that by assumption $x_{\star} \in \relint(\cC) \cap \interior \setX$, and $x_{\star} \in \cH_t$ is true by convexity of $f$ (or, in other words, the boundary of $\cH_t$ is a separating hyperplane of $x_t$ and $x_{\star}$).

\paragraph{Main assumption and relaxations.}  
The assumption that $x_{\star} \in \relint(\setC) \cap \interior \setX$ in Lemma~\ref{prop:breg_stationarity} is to guarantee constraint qualification for~\cref{eq:constrained_polyak_def}.
    Specifically, we use it to show that the intersection 
    $\relint \setC \cap \interior  \cX \cap \setH_t$ is non-empty for every non-optimal iterate $x_t$. 
Alternatively, we could assume that $f$ is strictly convex, 
or that $\relint \setC \cap \interior  \cX \cap \setH_t \neq \emptyset$ for all $t$ as does~\citet{kiwiel1997bregman},
or use an upper bound on~$f_{\star}$ in the definition of the hyperplane
instead of the exact optimal value~\citep{kiwiel1998generalizedbp},
although this last option prevents convergence to the minimum.

This assumption guards against the possibility that the solution 
of the projection is at $\alpha_t \to \infty$.
Suppose the domain of $f$, the set $\setX$, is the nonnegative orthant and \(\cC\) is the simplex, thus both are closed and convex. Further assume that $f$
is minimized at a unique point~$x_{\star}$ in $\cC$ and it is on the boundary of $\setX$~(the optimal probability vector has one or more coordinates with no mass).
If the hyperplane selected passes through $x_\star$ but does not intersect $\relint(\cC) \cap \interior(\cX) $ (which is just $\relint(\cC)$ in this case), 
the only solution for the Bregman projection is on the boundary of~$\setX$.
For entropy-type mirror maps, such boundary points correspond to infinite
values in the dual coordinates, so the projection may only be realized
in the limit $\alpha_t\to\infty$.
Handling such limiting projections requires additional care.

\clearpage
\section{Performance estimation verification for small steps}
\label{apx:pepit}
We used \emph{performance estimation problems} to verify the tightness of certain results.
In particular, it might appear strange at first sight 
that the mirror Polyak step-size can be arbitrarily small in \Cref{thm:smooth-strongly-convex}, 
and that the proof requires combining two types of Lyapunov analyses (namely convergence in Bregman divergence and convergence in function values). 
\Cref{fig:pepit_valid} illustrates that arbitrarily small step-sizes are possible  
and that the two (simple) bounds are (arguably unfortunately) tight.

For each tested value of $\alpha$, 
the program verifies that there exists a pair $(f,h)$ 
such that the problem instance~$f$ 
is $L$-smooth and $\mu$-strongly convex 
relative to the reference function~$h$,
for which mirror descent with step-size $\alpha$ is valid. 
Among those feasible instances, 
the program picks one that maximizes the one-step ratios
\aligns{
    \frac{D(x_\star, x_{1})}{D(x_\star, x_0)} \leq (1-\alpha \mu),
    &&
    \frac{f(x_{1}) - f_\star}{f(x_0) - f_\star} 
    \leq L \alpha,
}
over $x_0, f, h : 0 \preceq \mu\nabla^2 h(x) \preceq \nabla^2 f(x) \preceq L\nabla^2 h(x) \forall x$, 
with $x_{1}$ given by mirror Polyak with step-size $\alpha$. The detailed techniques for formulating 
and solving such problems are provided 
by~\citet{barre2020complexity} and \citet{dragomir2022optimal}, 
both of which are implemented in the PEPit package~\citep{goujaud2024pepit}, 
which we use for simplicity. 

\vfill

\begin{figure}[h]
    \includegraphics[width=\linewidth]{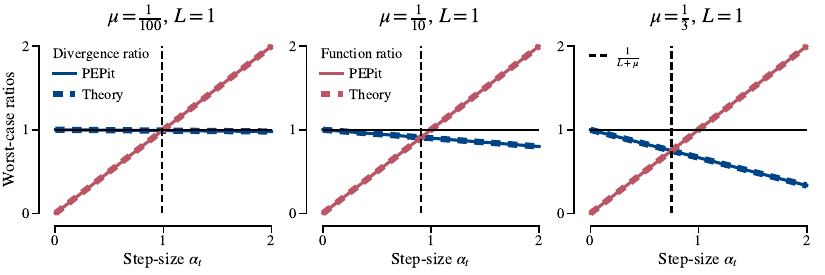}
    \centering
    \caption{
        Comparison of \emph{tight} one-iteration analyses 
        using PEPit~\citep{goujaud2024pepit} 
        with the bounds~\eqref{eq:rel-sc-divergence-rate} 
        and~\eqref{eq:rel-sc-function-rate} 
        in the proof of~\Cref{thm:smooth-strongly-convex}. 
    Tests with other values of $L$ behave similarly 
    within the range of feasible step-sizes, 
    $\alpha_t\in\big(0,\tfrac{1}{\mu}\big)$.}
    \label{fig:pepit_valid}
\end{figure}

\vfill
\null

\clearpage
\section{Experimental details}
\label{apx:experiments_details}

\paragraph{Dataset and problem formulation.}
The logistic regression experiments in \Cref{fig:lifted_duality_gap_experiment}
were run on the a1a~\citep{becker1996adult} 
distributed by the LIBSVM dataset repository~\citep{chang2011libsvm}
using the transformation described by \citet{platt1998fast}.
The linear regression uses the abalone~\citep{abalone1994} dataset.
The input data is normalized (subtract the mean, divide by standard deviation).
In both cases, the regularization parameter is set to $\lambda = 1$ 
(in the sum formulation of the problem, otherwise $\lambda = 1/n$ for the average).

\paragraph{Initialization.}
All methods are initialized at 0, 
except for Twin Polyak,
for which we use the initialization $x_0 = 0$ and $y_0 \sim \Normal{0, 1}$.
\citet{abdukhakimov2025polyakstepsizeestimatingoptimal}
recommend using $x_0, y_0 \sim \Normal{0,1}$.
Changing $x_0 = 0$ does not change the ordering of the methods,
so we use it to ensure all methods start at the same function value.
The dual variables for the logistic regression problem are initialized at $y_0 = 1/2$.

\paragraph{Getting $f_{\star}$.}
The minimum objective value $\tilde f_\star$ used to normalize the curves 
and set the idealized Polyak step-size 
is obtained by BFGS run until $\norm{\nabla f(x)} \leq 10^{-6}$,
which implies the optimality gap is accurate up to
$\tilde f_{\star} - f_{\star} \leq 10^{-12}$,
as $f$ is 1-strongly convex.

\paragraph{Mirror map for the dual.}
For the logistic regression problem,
we use the Euclidean mirror map for the primal variables,
$h_\setX(x) = \frac{1}{2} \norm{x}^2$,
and the negative entropy mirror map for the dual variables,
$h_\setY(y) = \sum_{i=1}^n y_i \log y_i + (1 - y_i) \log(1 - y_i)$.
For the norm-based variant of mirror Polyak, 
we use $h'_\setY(y) = \frac{1}{4} h_\setY(y)$
to ensure it is 1-strongly convex.

\paragraph{Hyperparameters.}
The level estimation method of \citet{brannlund1993relaxation}
(also analyzed by \citealt{goffin1999convergence,nedic2001convergence}),
requires two parameters, $\delta$ and $R$.
\citet{goffin1999convergence}
discuss that $\delta$ should estimate the initial optimality gap,~$\delta \approx f(x_0) - f_{\star}$,
while $R$ should estimate the distance to the optimum,~$R \approx \norm{x_0 - x_{\star}}$.
The parameters need not be set exactly, 
as the method converges asymptotically for any positive values,
but the performance of the method depends on getting the order of magnitude right.
The values used in \Cref{fig:lifted_duality_gap_experiment}
are $\delta = f(x_0)$ and $R = 1$ for the tuned method
and $\delta = 1, R = 1$ for the untuned method. 
Much smaller or much larger values of $R$ gave poorer performance.

\end{appendices}

\end{document}